\documentclass[10pt,twoside,a4paper]{amsart}

\usepackage[english]{babel}
\usepackage[utf8]{inputenc}

\usepackage{amsfonts,amsthm,amssymb,amsmath,amsthm,latexsym,wasysym,stmaryrd,mathrsfs,dsfont,txfonts}
\usepackage{accents,multirow,rotating,subfigure,graphicx,bigstrut,lscape,multicol,fancyhdr,enumerate,accents,mathtools,exscale}

\usepackage{pdftricks}
\usepackage{color}
\usepackage[pdftex,all]{xy}

\usepackage{hyperref}
\usepackage{appendix}
\usepackage[justification=centering]{caption}

\theoremstyle{theorem}
\newtheorem {theo}{Theorem}[section]
\newtheorem*{theo*}{Theorem}
\newtheorem {lemme}[theo]{Lemma}
\newtheorem*{lemme*}{Lemma}
\newtheorem {prop}[theo]{Proposition}
\newtheorem*{prop*}{Proposition}
\newtheorem {cor}[theo]{Corollary}
\newtheorem*{cor*}{Corollary}

\newtheorem*{cor_proof*}{Corollary (of the proof)}

\newtheorem*{conjecture*}{Conjecture}
\theoremstyle{definition}
\newtheorem {defi}[theo]{Definition}
\newtheorem*{defi*}{Definition}
\newtheorem {nota}[theo]{Notation}
\newtheorem*{nota*}{Notation}
\theoremstyle{remark}
\newtheorem {remarque}[theo]{Remark}
\newtheorem*{remarque*}{Remark}

\newtheorem*{warning*}{Warning}
\newtheorem {remarques}[theo]{Remarks}
\newtheorem*{remarques*}{Remarks}

\newtheorem*{warnings*}{Warnings}

\newtheorem*{convention*}{Convention}

\newtheorem*{exemple*}{Example}

\newtheorem*{exemples*}{Examples}
\newtheorem {question}[theo]{Question}
\newtheorem*{question*}{Question}

\newtheorem*{questions*}{Questions}

\newtheorem*{fact*}{Fact}
\newtheorem*{acknowledgments}{Acknowledgments}

\setcounter{secnumdepth}{4}

%\makeindex

\newcommand{\DD}{{\mathcal D}}

\newcommand{\GG}{{\mathcal G}}

\newcommand{\KK}{{\mathcal K}}

\newcommand{\R}{{\mathds R}}

\newcommand{\TT}{{\mathcal T}}

\newcommand{\e}{\varepsilon}
\newcommand{\p}{\partial}

\newcommand{\Diff}{{\textnormal{Diff}}}

\newcommand{\Id}{{\textnormal{Id}}}

\newcommand{\ie}{{\it i.e. }}

\newcommand{\psqcup}{\operatornamewithlimits{\sqcup}\limits}

\newcommand{\fract}[2]{\hbox{\leavevmode
  \kern.1em \raise .25ex \hbox{\the\scriptfont0 $#1$}\kern-.1em }\big/
  {\hbox{\kern-.15em \lower .5ex \hbox{\the\scriptfont0 $#2$}} }}

\newcommand{\ffract}[2]{\hbox{\leavevmode
  \kern.1em \raise .25ex \hbox{\the\scriptfont0 $#1$}\kern-.1em }\big/
  {\hbox{\kern-.15em \lower .5ex \hbox{\the\scriptfont0 \scriptsize $#2$}} }}

\newcommand{\fractt}[2]{\hbox{\leavevmode
  \kern.1em \raise .25ex \hbox{\the\scriptfont0 $#1$}\kern-.1em
}\lower .2ex\hbox{\Big/}
  {\hbox{\kern-.15em \lower .8ex \hbox{\the\scriptfont0 $#2$}} }}

\newcommand{\subfract}[2]{\hbox{\leavevmode
  \kern.1em \raise .25ex \hbox{\the\scriptfont0 \scriptsize $#1$}\kern-.1em }/
  {\hbox{\kern-.15em \lower .5ex \hbox{\the\scriptfont0 \scriptsize $#2$}} }}

\newcommand{\noi}{\noindent}

\newcommand{\dessin}[2]{
  \vcenter{\hbox{\includegraphics[height=#1]{#2.pdf}}}}
\newcommand{\dessinH}[2]{
  \vcenter{\hbox{\includegraphics[width=#1]{#2.pdf}}}}
\newcommand{\fdessin}[2]{\fbox{$\dessin{#1}{#2}$}}
\newcommand{\fdessinH}[2]{\fbox{$\dessinH{#1}{#2}$}}

%%% Specifique

\newcommand{\bD}{\textrm{b}\DD}
\newcommand{\bK}{\textrm{b}\KK}
\newcommand{\bT}{\textrm{b}\TT}

\newcommand{\rK}{\textrm{r}\KK}
\newcommand{\rT}{\textrm{r}\TT}

\newcommand{\vD}{\textrm{v}\DD}
\newcommand{\wK}{\textrm{w}\KK}
\newcommand{\wT}{\textrm{w}\TT}

\newcommand{\wGD}{\textrm{w}\GG\DD}
\newcommand{\wGK}{\textrm{w}\GG\KK}

\newcommand{\xB}{\textrm{B}}
\newcommand{\tB}{\widetilde{\xB}}
\newcommand{\xR}{\textrm{R}}
\newcommand{\tR}{\widetilde{\xR}}
\newcommand{\xW}{\textrm{W}}
\newcommand{\xT}{\textrm{T}}
\newcommand{\Tube}{\textrm{Tube}}

\newcommand{\tT}{\widetilde{T}}

\newcommand{\blc}{.2cm}
\newcommand{\Rar}{\hspace{\blc}\rightarrow\hspace{\blc}}
\newcommand{\Lar}{\hspace{\blc}\leftarrow\hspace{\blc}}

\begin{document}

\title{On the welded Tube map}
\author{Benjamin \textsc{Audoux}}
\date{\today}
\address{Aix Marseille Université, I2M, UMR 7373, 13453 Marseille, France
}
\email{benjamin.audoux@univ-amu.fr}
\thanks{The author is supported by the French ANR research project ``VasKho'' ANR-11-JS01-00201.}

\begin{abstract}
This paper investigates the so-called $\Tube$ map which connects welded knots, that is a quotient of the virtual knot theory, to ribbon torus-knots, that is a restricted notion of fillable knotted tori in $S^4$.
It emphasizes the fact that ribbon torus-knots with a given filling are in one-to-one correspondence with welded knots before quotient under classical Reidemeister moves and reformulates these moves and the known source of non-injectivity of the $\Tube$ map in terms of filling changes.
\end{abstract}

\subjclass[2010]{57Q45}
\keywords{ribbon singularities, ribbon (solid) torus, ribbon torus-knots, welded diagrams, welded knots, Tube map}

\maketitle

% Introduction
This papers investigates a known connection between two notions of distinct nature : welded knots, which are combinatorial elements of a quotient of the virtual knot theory; and ribbon torus-knots, which are a restricted notion of topological knotted surfaces in $S^4$.

\medskip

Virtual knots are a completion of usual knots, seen from the diagrammatical point of view. Knot diagrams can be thought of as abstract oriented circles with a pairing, that is a finite number of signed pairs of ordered ``merged'' points.
Indeed, as shown in Figure \ref{fig:PairCross}, every such ordered and signed pair describes a crossing and the rest of the circle prescribes how the ends of these crossings are connected; up to isotopy, this data is sufficient to recover a diagram and hence a knot in $\R^3$. However, not all pairings are realizable as a diagram since it may be impossible to connect, in the plane, all ends as prescribed without introducing some additional crossing.
A virual knot is such an abstract circle with a pairing, whether it is realizable or not, up to some relevant moves inherited from the usual knot theory. Equivalently, it can be described as a diagram with possibly some additional \emph{virtual} crossings, represented as circled crossings, which are not reported in the pairing.
\begin{figure}[h]
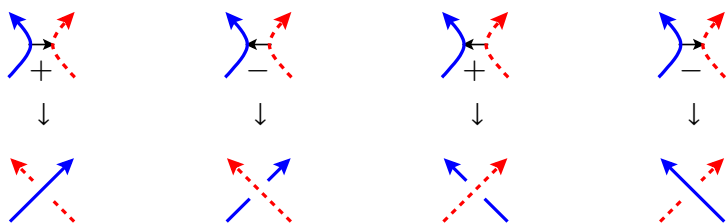

  \[
  \begin{array}{c}
\dessin{1.5cm}{P1}\\\downarrow\\\dessin{1.5cm}{C1}
\end{array}
\hspace{1cm}
  \begin{array}{c}
\dessin{1.5cm}{P2}\\\downarrow\\\dessin{1.5cm}{C2}
\end{array}
\hspace{1cm}
  \begin{array}{c}
\dessin{1.5cm}{P3}\\\downarrow\\\dessin{1.5cm}{C3}
\end{array}
\hspace{1cm}
  \begin{array}{c}
\dessin{1.5cm}{P4}\\\downarrow\\\dessin{1.5cm}{C4}
\end{array}
  \]
  \caption{Correspondance between oriented and signed pairs and crossings}
  \label{fig:PairCross}
\end{figure}

Virtual knots can be interpreted as knots in thickened surfaces modulo handle stabilization \cite{CKS,Kuperberg}. Because of the stabilizations, this does not provide a well defined notion of complement; in spite of that, the usual notion of knot group, \ie fundamental group of the complement, can be combinatorially extended to the virtual case \cite{Kim,Silver,BB}. Knot groups happens to be invariant under the so-called \emph{over-commute move} (OC), a move (see Figure \ref{fig:OC}) which, in general, modifies virtual knots. There are other topological invariants which, similarly, can be combinatorially extended and proved to be invariant under OC.
This motivates the definition of \emph{welded knots} which are the quotient of virtual knots under these moves.
Welded knotted objects first appeared in a work of Fenn-Rimanyi-Rourke in the more algebraic context of braids \cite{FRR}.
At this stage, one can hope that welded knots admit a deeper topological interpretation than virtual knots do, yielding back a topological nature for the combinatorial extension of the invariants mentioned above. 

\medskip

The theory of knotted surfaces in 4--space takes its origins in the mid-twenties from the work of Artin \cite{artin2}. 
However, the systematic study of these objects only really began in the early sixties, notably through the work of Kervaire, Fox and Milnor \cite{KM,kervaire,FM}, 
but also in a series of papers from Kansai area, Japan (see references in \cite{Suzuki}).
From this early stage, the class of ribbon surfaces was given a particular attention.
Roughly speaking, an embedded surface in $S^4$ is ribbon if it can be filled by an immersed $3$-manifold whose singular set is a finite number of rather simple singularities called \emph{ribbon disks}.
Asking for the surface to be fillable does not change the notion of isotopy but restrict the equivalence classes we are looking at. Indeed, an ambiant isotopy transport as well any given filling, so two isotopic surfaces are simultaneously fillable or not.
On the contrary, asking for a surface to be actually filled multiplies the number of equivalence classes since a single surface may have several non isotopic fillings.
Asking for filledness better than for fillability is hence more constraining but --- at least, this is what the present paper tends to show --- it is easier to handle. Fillable surfaces are furthermore obtained as the quotient of filled surfaces under filling changes.

The case of the torus was already addressed in \cite{Yaji} under the name of ribbon torus-knots. In his paper, T. Yajima laid the foundation of the so-called $\Tube$ map which inflates usual knot into ribbon torus-knots. In \cite{Satoh}, S. Satoh shows that this map can actually be extended to all welded knots.
He proves moreover that the map is then surjective, that the welded combinatorial knot group corresponds to the fundamental group of the complement in $S^4$ of the image, and that it commutes with the orientation reversals --- what is not direct from the definition but a consequence of the torus eversion in $S^4$.
Ribbon torus-knots are hence good candidates for a topological interpretation of welded knots. However the $\Tube$ map is not injective and the lack of injectivity is not fully understood yet: the $\Tube$ map is known to
be invariant under a global reversal move (see Prop. 3.4 in \cite{Ishi} or Prop. \ref{prop:GlRevInv} in the present paper) but it is not known whether the $\Tube$ map quotiented by this move is injective.

\medskip

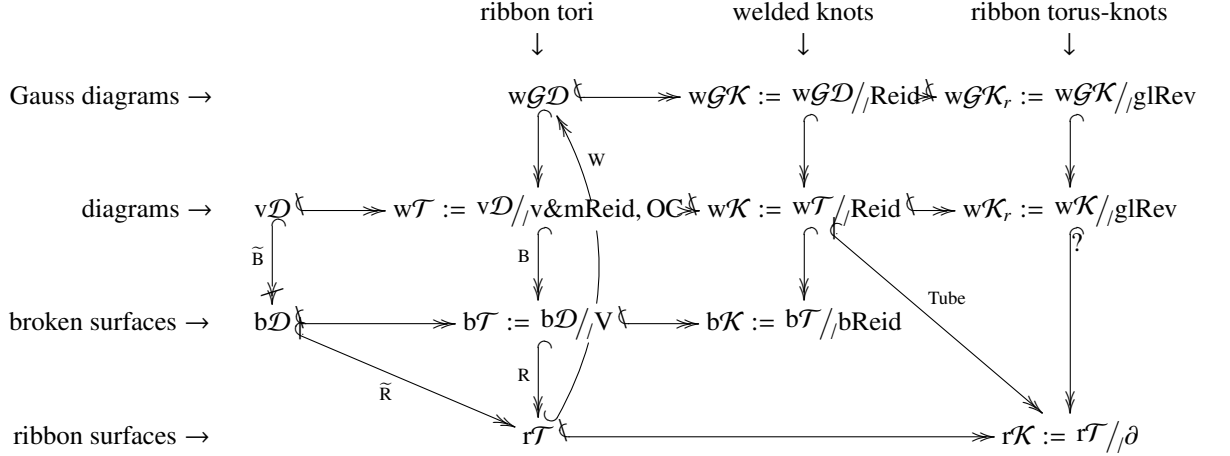
\begin{figure}
  \begin{gather*}
  \hspace{3.45cm}
  \xymatrix@!0 @C=3.5cm @R=.4cm{
&\textrm{ribbon tori}&\textrm{welded knots}&\textrm{ribbon torus-knots}\\
&\downarrow&\downarrow&\downarrow
  }\\
  \newdir{|(}{*!/.12cm $\setminus$/@{.}*@^{(}}
  \newdir{/(}{*@{ }*!/.03cm $|$/@{.}*@^{(}}
  \xymatrix@!0 @C=3.5cm @R=1.5cm{
    **[l]\textrm{Gauss diagrams}\rightarrow\\
    **[l]\textrm{diagrams}\rightarrow\\
    **[l]\textrm{broken surfaces}\rightarrow\\
    **[l]\textrm{ribbon surfaces}\rightarrow
    }
    \hspace{.3cm}
  \xymatrix@!0 @C=3.5cm @R=1.5cm{
    &
    \wGD \ar@{|(->>}[r] \ar@{^(->>}[d]&
    \wGK:=\ffract{\wGD}/{\textrm{Reid}} \ar@{|(->>}[r] \ar@{^(->>}[d]&
     \wGK_r:=\ffract{\wGK}/{\textrm{glRev}} \ar@{^(->>}[d]\\
     \vD \ar@{|(->>}[r] \ar@{^(->>}[d]|(.72){\rotatebox{90}{$\setminus$}}_(.4){\tB}&
     \wT:=\ffract{\vD}/{\textrm{v\&mReid},\textrm{OC}} \ar@{|(->>}[r] \ar@{^(->>}[d]_(.4){\xB}&
     \wK:=\ffract{\wT}/{\textrm{Reid}} \ar@{|(->>}[r] \ar@{^(->>}[d]\ar@{/(->>}[rdd]^(.45){\Tube}&
     \wK_r:=\ffract{\wK}/{\textrm{glRev}} \ar@{^(->>}[dd]^(.14){\hspace{-.1cm}\textrm{\normalsize ?}}\\
     \bD \ar@{|(->>}[r] \ar@{/(->>}[rd]_(.45){\tR}&
     \bT:=\ffract{\bD}/{\textrm{V}} \ar@{|(->>}[r]  \ar@{^(->>}[d]_(.45){\xR}&
     \bK:=\ffract{\bT}/{\textrm{bReid}} &
     \\
    &
     \rT \ar@{|(->>}[rr] \ar@<-.1cm>@{^(->>}@/_.7cm/[uuu]_(.8){\xW}|(.3){\hole}|(.37){\hole}|(.63){\hole}|(.67){\hole}&
     &
     \rK:=\ffract{\rT}/{\p}
    }
  \end{gather*}
  \caption{Summary of the combinatorial and/or topological sets and their connections}
  \label{fig:Summary}
\end{figure}

This paper contains no new result, but it reviews and reformulates the construction of the $\Tube$ map.
In particular, it emphasizes the fact that ribbon torus-knot with a given filling are in one-to-one correspondence with welded knots before quotient under classical Reidemeister moves. We interpret then invariance of the $\Tube$ map under Reidemeister moves, and notably Reidemeister move I which is the most intricated one, in terms of local filling changes.The global reversal move which let the $\Tube$ map invariant is also interpreted as a co-orientation change in the filling.
Another goal of this paper is to clarify the relationship between three kind of objects:
\begin{itemize}
\item ribbon ones which are 2 or 3--dimensional objects inside a 4--dimensional space;
\item broken ones, which are decorated 2--dimensional objects inside a 3-dimensional space;
\item welded ones which are decorated 1--dimensional objects inside a 2--dimensional space. There is also one exceptional item, namely welded Gauss diagrams, which is purely combinatorial. 
\end{itemize}
They are, respectively, denoted with a small ``r'', ``b'' or ``w'' prefix. 

The paper is organized as follows. The first section defines all the topological and/or combinatorial notions of knotted objects. There is a lot of redundancy in these notions; a summary of them is given in Figure \ref{fig:Summary}. In the second section, all maps between the different notions are defined, in particular the $\Tube$ map. The non-injectivity is discussed there.
The last section addresses the notion of \emph{wen}, which is a special portion of ribbon torus-knot embedded in $S^4$ as a Klein bottle cut along a meridional circle. Wens shall appear as items we want to avoid when projecting ribbon torus-knots in $S^3$, but also as a useful tool for addressing Reidemeister move I and the commutation of the $\Tube$ map with the orientation reversals.

\begin{acknowledgments}
This paper was initiated after a talk of the author at the conference ``Advanced school and discussion meeting on Knot theory and its applications'', organized at IISER Mohali in december 2013.
The author is grateful to Krishnendu Gongopadhyay for inviting him.
He also warmly thanks Ester Dalvit, Paolo Bellingeri, Jean-Baptiste Meilhan and Emmanuel Wagner for encouraging and stimulating conversations, and Hans Boden for pointing out a mistake in a previous version.
\end{acknowledgments}

%%% Local Variables: 
%%% mode: latex
%%% TeX-master: "OnTubeMap"
%%% End: 

% Chapitre 1
\section{Ribbon, broken and welded objects}
\label{sec:ribbon-notions}

\subsection{Ribbon tori and ribbon torus-knots}
\label{sec:ribbon-tori}

By convention, an immersion shall actually refer to the image $\Im(\widetilde{Y}\looparrowright X)\subset X$ of the immersion, whereas the immersed space $\widetilde{Y}$ shall be referred to as the associated \emph{abstract space}. 
Throughout this paper, every immersion $Y\subset X$ shall be considered \emph{locally flat}, that is locally homeomorphic to a linear subspace $\R^k$ in $\R^m$ for some positive integers $k\leq m$, except on $\p Y$ (resp. $\p X$), where $\R^k$ (resp. $\R^m$) should be replaced by $\R_+\times\R^{k-1}$ (resp. $\R_+\times\R^{m-1}$); and every intersection $Y_1\cap Y_2\subset X$ shall be considered \emph{flatly transverse}, that is locally homeomorphic to the intersection of two linear subspaces $\R^{k_1}$ and $\R^{k_2}$ in $\R^m$ for some positive integers $k_1,k_2\leq m$, except on $\p Y_1$ (resp. $\p Y_2$, $\p X$), where $\R^{k_1}$ (resp. $\R^{k_2}$, $\R^m$) should be replaced by $\R_+\times\R^{k_1-1}$ (resp. $\R_+\times\R^{k_1-1}$, $\R_+\times\R^{m-1}$).

Here, we shall consider 3--dimensional spaces immersed in $S^4$ in
a quite restrictive way, since we allow only the following type of singularity.
\begin{defi}\label{def:Ribbon}
  An intersection $D:=Y_1\cap Y_2\subset S^4$ is a \emph{ribbon disk} if it is isomorphic to the 2--dimensional disk and satisfies $D\subset\mathring{Y}_1$, $\mathring{D}\subset\mathring{Y}_2$ and $\p D$ is an essential curve in $\p Y_2$.
\end{defi}

Before defining the main topological objects of this paper, we want to stress the fact that, besides a 3--dimensional orientation, a solid torus can be given a co-orientation, that is a 1--dimensional orientation of
its core.
Note that orientation and co-orientation are independent notions and that the 3--dimensional orientation can be equivalently given as a 2--dimensional orientation on the boundary.
We say that a solid torus is \emph{bi-oriented} if it is given an orientation and a co-orientation.

\begin{defi}\label{def:RibbonTorus}
  A \emph{ribbon torus} is a bi-oriented immersed solid torus $D^2\times S^1\subset S^4$ whose singular set consists of a finite number of ribbon disks.\\
  We define $\rT$ as the set of ribbon tori up to ambient isotopy.
\end{defi}

\begin{defi}\label{def:RibbonTorusKnot}
  A \emph{ribbon torus-knot} is an embedded oriented torus $S^1\times S^1\subset S^4$ which bounds a ribbon torus.
We say that it admits a \emph{ribbon filling}.\\
  We define $\rK$ as the set of ribbon torus-knots up to ambient isotopy.
\end{defi}

Definitions \ref{def:RibbonTorus} and \ref{def:RibbonTorusKnot} use the same notion of ambient isotopy. By forgetting its interior and keeping only its boundary, one sends hence any ribbon torus to a ribbon torus-knot. However, a given ribbon torus-knot may have several non isotopic ribbon filling. It follows that $\rK$ may be seen as the non trivial quotient $\fract{\rT}/{\p}$, where $\p$ is the equivalence relation generated by $T\stackrel{\p}{\simeq}T'\Leftrightarrow \p T=\p T'$.

\begin{remarque}
  To emphasize the connection with welded diagrams, we have chosen to deal with ribbon torus-knots, but other ribbon knotted objects in 4--dimensional spaces can be defined similarly, such as ribbon $2$--knots \cite{Yajima,Yanagawa,Suzuki,Cochran,KS}, ribbon $2$--links, ribbon torus-links or ribbon tubes \cite{WAMB}. Since our considerations shall be local, most of the material in this paper can be transposed to any of these notions.
\end{remarque}

\subsection{Broken torus diagrams}
\label{sec:broken-torus-diagrams}

\begin{figure}
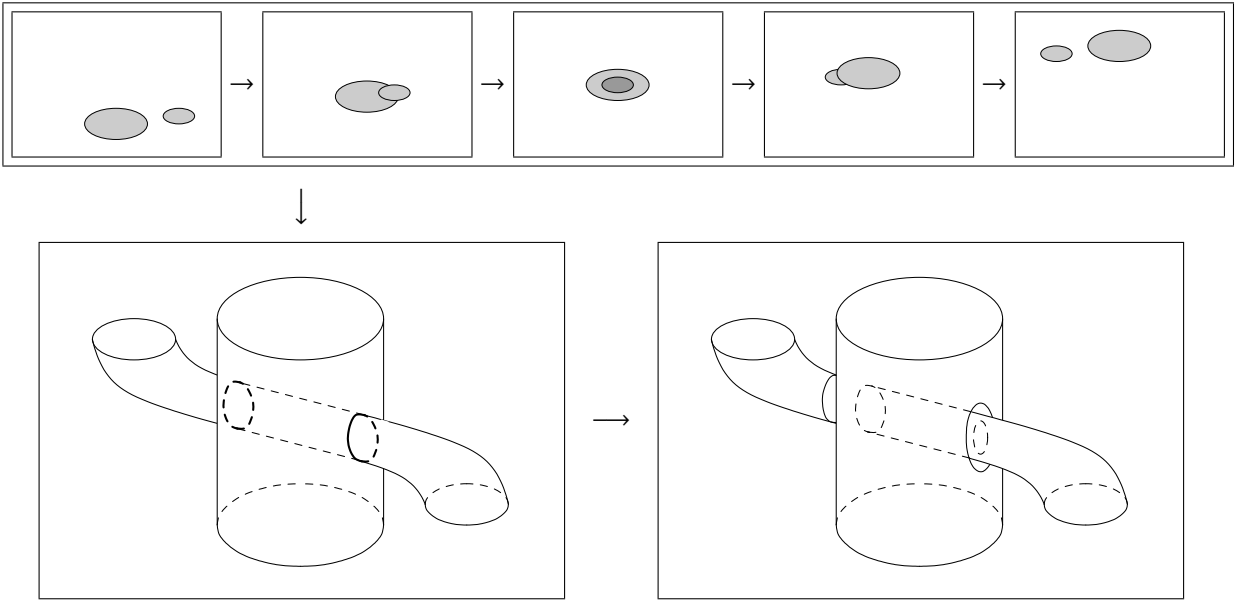

  \[
 \hspace{-.3cm} 
  \fbox{$\fdessin{1.7cm}{SingFly_1}\rightarrow \fdessin{1.7cm}{SingFly_3}\rightarrow \fdessin{1.7cm}{SingFly_4}\rightarrow \fdessin{1.7cm}{SingFly_5}\rightarrow \fdessin{1.7cm}{SingFly_7}$}
  \]
\[
\begin{array}{ccc}
  \rotatebox{90}{$\ \longleftarrow\ $}&&\\
  \fdessin{4.5cm}{SingFly_0}&\longrightarrow&\fdessin{4.5cm}{SingFly_D}
\end{array}
\]
  \caption{Around a ribbon disk}
\label{fig:SingFly}
\end{figure}

Away from ribbon disks, pieces of a ribbon torus-knot are just annuli which can be properly projected into tubes in $\R^3$.
Around any ribbon disk, the local flatness assumption allows a local
parametrization as $B^3\times [0,1]$, where the last summand is seen as a time parameter so that the
ribbon torus-knot locally corresponds to the motion of two horizontal circles,
one flying through the other. Note that these flying circles arise naturally as
the boundary of flyings disks in $\R^3$.
By projecting along the time parameter, that is by keeping the residual track of the flying disks, we obtain two tubes,
one passing through the other.
In doing so, it creates two singular circles. But the tubes do not cross in $S^4$ so it means that for each singular circle, the time parameter separates the two preimages. There are hence one circle preimage on each tube, one having time parameters smaller than the other.
By convention, we erase from the picture a small neighborhood of the preimage with the lowest projecting parameter. See Figure \ref{fig:SingFly} for a picture. 
As we shall see, these local projections can actually be made global, and they motivate the following definition.
\begin{defi}
  A \emph{broken torus diagram} is a torus immersed in $\R^3$ whose singular set is a finite number of transverse singular circles, each of which is equipped with an order on its two preimages. As noted above and by convention, this order is specified on picture by erasing a small neighborhood of the lowest preimage.\\
  The broken surface diagram is furthermore said \emph{symmetric} if it is locally homeomorphic to either
\[
\dessin{3cm}{Piece_trivial}
\hspace{1cm}\textrm{or}\hspace{1cm}
\dessin{3cm}{Piece_crossing}.
\]
\noi This means singular circles are pairwise matched in such a way that, for any pair, both circle has 
\begin{itemize}
\item an essential preimage: they are consecutive on a tube and the piece of tube in-between is empty of any other singular circle;
\item a non essential preimage: they are in the same connected component of the torus minus all essential preimages, and they are respectively higher and lower than their essential counterparts for the associated orders. 
\end{itemize}

As is clear from the pictures above, a symmetric broken torus diagram comes with an obvious solid torus filling which is naturally oriented by the ambient space. It is hence sufficient to enhance it with a co-orientation to provide a bi-orientation.
As a consequence, a symmetric broken torus diagram is said \emph{bi-oriented} if its natural filling is given a co-orientation.\\
We define $\bD$ the set of bi-oriented symmetric broken torus diagrams up to ambient isotopy. Unless otherwise specified, we shall assume, in the following, that all broken torus diagrams are symmetric.
\end{defi}

\begin{figure}
\[
\textrm{RI:}\xymatrix{\dessin{3.7cm}{BS_R1_1}\ar@{^<-_>}[r]&\dessin{3.7cm}{BS_R1_2}}
\hspace{1cm}
\textrm{RII:}\xymatrix{\dessin{3.7cm}{BS_R2_1}\ar@{^<-_>}[r]&\dessin{3.7cm}{BS_R2_2}}
\]
\[
\textrm{RIII:}\xymatrix{\dessin{3.7cm}{BS_R3_1}\ar@{^<-_>}[r]&\dessin{3.7cm}{BS_R3_2}}
\]
  \caption{Broken Reidemeister moves bReid for broken torus diagrams:\\\vspace{-.25cm}{\scriptsize in these pictues, each singularity of type \protect\raisebox{-.35cm}{\protect\rotatebox{45}{$\protect\dessin{1cm}{Piece_crossing1}$}}
can be turned into a type \protect\raisebox{-.35cm}{\protect\rotatebox{45}{$\protect\dessin{1cm}{Piece_crossing2}$}} one;}\\\vspace{-.25cm}{\scriptsize  however, the two singularities of move RII and the two singularities on the widest tube of move RIII}\\{\scriptsize must be then turned simultaneously}}
  \label{fig:BrokenReid}
\end{figure}
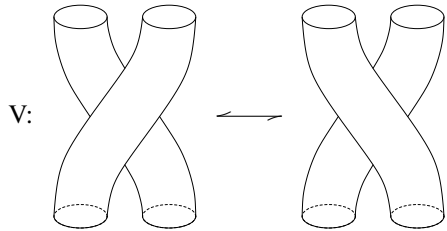
\begin{figure}
\[
\textrm{V:}\xymatrix{\dessin{3.3cm}{V_1}\ar@{^<-_>}[r]&\dessin{3.3cm}{V_2}}
\]
  \caption{Virtual move for broken torus diagrams}
  \label{fig:V}
\end{figure}

A \emph{local move} is a transformation that changes a diagrammatical and/or topological object only inside a ball of the appropriate dimension. By convention, we represent only the ball where the move occurs, and the reader should keep in mind that there is a non represented part, which is identical for each side of the move.
Broken torus diagrams are higher-dimensional counterpart of usual knot diagrams.
As such, we shall quotient them by some local moves. 
\begin{defi}
  We define bReid, the \emph{broken Reidemeister moves}, and V, the \emph{virtual move}, on broken torus diagrams as the local moves shown, respectively, in Figure \ref{fig:BrokenReid} and \ref{fig:V}.
Moreover, we define the set $\bT:=\fract{\bD}/{\textrm{V}}$ of \emph{broken tori} as the quotient of broken torus diagrams under the virtual move; and the set $\bK:=\fract{\bD}/{\textrm{bReid},\textrm{V}}$ of \emph{broken knots} as the quotient of broken torus diagrams under virtual and broken Reidemeister moves, or equivalently as the quotient of broken tori under broken Reidemeister moves.
\end{defi}

\subsection{Welded knots}
\label{sec:welded-knots}
Broken torus diagrams appeared as a partially combinatorial ``$\,2\!\!\looparrowright\!\!3$''--dimensional description of ribbon objects; but, as we shall see, this description can be cut down one dimension more.  

\begin{defi}
A \emph{virtual diagram} is an oriented circle immersed in $\R^2$ whose singular set is a finite number of transverse double points, called \emph{crossing}, each of which is equipped with a partial order on its two preimages. By convention, this order is specified by erasing a small neighborhood of the lowest preimage, or by circling the crossing if the preimages are not comparable.\\
If the preimages of a crossing are comparable, then the crossing is said \emph{classical}; otherwise it is said \emph{virtual}.
Moreover, a classical crossing is said \emph{positive} if the basis made of the tangent vectors of the highest and lowest preimages is positive; otherwise, it is said \emph{negative}.\\
We define $\vD$ as the set of virtual diagrams up to ambient isotopy.
\end{defi}

\begin{figure}
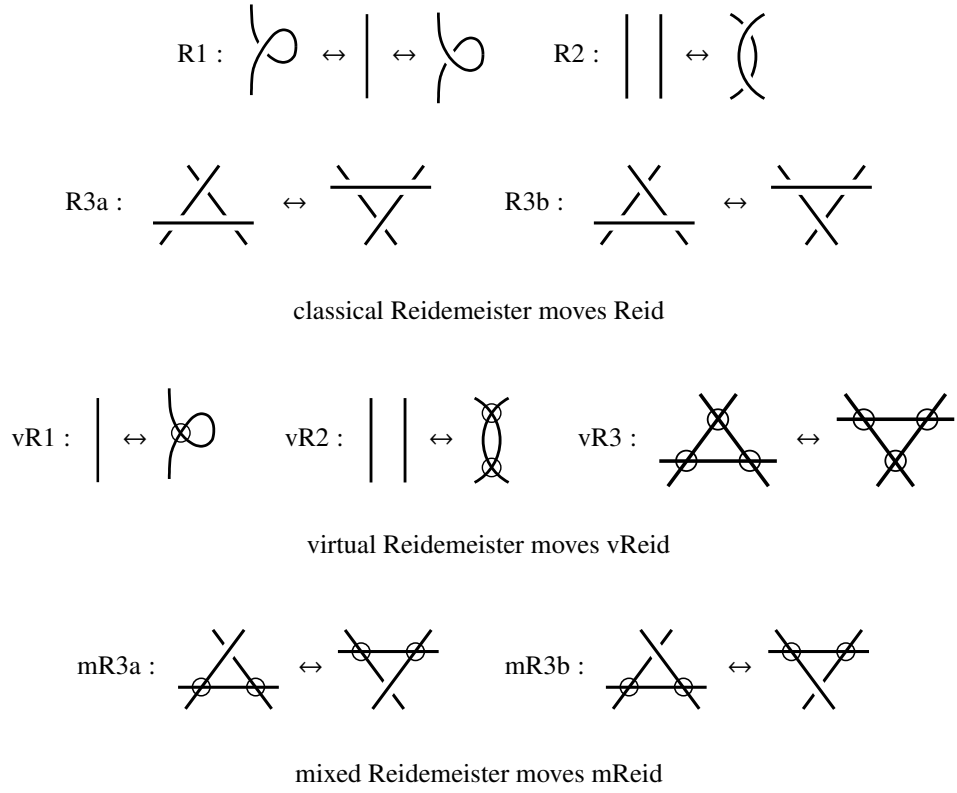

  \[
  \begin{array}{c}
    \begin{array}{c}
       \textrm{R1}:\ \dessin{1.5cm}{ReidI_2}
    \leftrightarrow \dessin{1.5cm}{ReidI_1} \leftrightarrow \dessin{1.5cm}{ReidI_3}
\hspace{.7cm}
\textrm{R2}:\ \dessin{1.5cm}{ReidII_1} \leftrightarrow \dessin{1.5cm}{ReidII_2}\\[1cm]
\textrm{R3a}:\ \dessin{1.5cm}{ReidIIIa_1} \leftrightarrow \dessin{1.5cm}{ReidIIIa_2}
\hspace{.7cm}
\textrm{R3b}:\ \dessin{1.5cm}{ReidIIIb_1} \leftrightarrow \dessin{1.5cm}{ReidIIIb_2}
\end{array}\\[2cm]
    \textrm{classical Reidemeister moves Reid}
  \end{array}
  \]
\vspace{.5cm}
  \[
     \begin{array}{c}
       \textrm{vR1}:\ \dessin{1.5cm}{vReidI_1}
    \leftrightarrow \dessin{1.5cm}{vReidI_2}
\hspace{.7cm}
\textrm{vR2}:\ \dessin{1.5cm}{vReidII_1} \leftrightarrow \dessin{1.5cm}{vReidII_2}
\hspace{.7cm}
\textrm{vR3}:\ \dessin{1.5cm}{vReidIII_1} \leftrightarrow \dessin{1.5cm}{vReidIII_2}\\[1cm]
    \textrm{virtual Reidemeister moves vReid}
  \end{array}
   \]
\vspace{.5cm}
  \[
      \begin{array}{c}
       \textrm{mR3a}:\ \dessin{1.5cm}{mReidIIIa_1}
    \leftrightarrow \dessin{1.5cm}{mReidIIIa_2}
\hspace{.7cm}
\textrm{mR3b}:\ \dessin{1.5cm}{mReidIIIb_1} \leftrightarrow \dessin{1.5cm}{mReidIIIb_2}\\[1cm]
    \textrm{mixed Reidemeister moves mReid}
  \end{array}
  \]
  \caption{Generalized Reidemeister moves gReid on virtual diagrams}
  \label{fig:GenReid}
\end{figure}
\begin{figure}
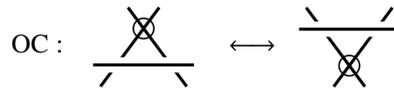

 \[
\textrm{OC}:\ \dessin{1.5cm}{OC_1}\ \longleftrightarrow \dessin{1.5cm}{OC_2}
\] 
  \caption{Over commute move on virtual diagrams}
  \label{fig:OC}
\end{figure}

\begin{defi}
  We define the \emph{generalized Reidemeister moves} gReid --- separated into classical moves Reid, virtual moves vReid and mixed moves mReid --- and the \emph{over commute move} OC on virtual diagrams as the local moves shown, respectively, in Figure \ref{fig:GenReid} and \ref{fig:OC}.
Moreover, we define the set $\wT:=\fract{\vD}/{\textrm{vReid},\textrm{mReid},\textrm{OC}}$ of \emph{welded tori} as the quotient of virtual diagrams under virtual Reidemeister, mixed Reidemeister and over commute moves; and the set $\wK:=\fract{\vD}/{\textrm{gReid},\textrm{OC}}$ of \emph{welded knots} as the quotient of virtual diagrams under generalized Reidemeister and over commute moves, that is as the quotient of welded tori under classical Reidemeister moves.
  \end{defi}

 Classical Reidemeister moves are vestiges of the diagrammatical description of the usual knot theory. Virtual and mixed Reidemeister moves essentially states that only classical crossings and the abstract connections between their endpoints are meaningful, and not the actual strands which realize these connections.
Over commute move states that two classical
crossings connected by a strand which is the highest strand for both
crossing can be commuted.
As a matter of fact, it only matters the cyclic
order of lowest preimages of classical crossings, as they are met
while running positively along the immersed oriented circle, and the unordered sets of
highest preimages which are located between two consecutive lowest
preimages.
This motivates the following definition:
\begin{defi}
  A welded Gauss diagram is a finite set $C$ given with a cyclic order
  and a map from $C$ to $C\times\{\pm1\}$.\\
  We define $\wGD$ the set of welded Gauss diagram.
\end{defi}
\begin{prop}
  There is a one-to-one correspondence between $\wGD$ and $\wT$.
\end{prop}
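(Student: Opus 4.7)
I would construct two maps $\Phi:\wT\to\wGD$ and $\Psi:\wGD\to\wT$ and verify that they are mutually inverse, the main work being in checking well-definedness.

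\emph{Forward map.} Given a virtual diagram $D$, let $C$ be its set of classical crossings. The oriented immersed circle visits each such $c\in C$ exactly twice, once at the over-preimage $o(c)$ and once at the under-preimage $u(c)$. Reading only the under-preimages along the orientation yields a cyclic order on $C$. For each $c\in C$ I would set $\Phi(D)(c):=(c',\e(c))$, where $\e(c)\in\{\pm1\}$ is the sign of the classical crossing $c$, and $c'$ is the unique element whose under-preimage $u(c')$ is the first one encountered along the orientation after $o(c)$. The three moves defining $\wT$ leave this data invariant: vReid and mReid alter neither the set of classical crossings nor the cyclic sequence of their under-preimages, nor the arc in which an over-preimage sits (since they only rearrange virtual crossings and drag strands through virtual regions); and OC swaps two over-preimages sitting between the same pair of consecutive under-preimages, which is exactly the indeterminacy already quotiented out by recording only the \emph{arc index} of each over-preimage. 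So $\Phi$ descends to $\wT$.

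\emph{Backward map.} Given $(C,\prec,\varphi)$ with $\varphi(c)=(\alpha(c),\e(c))$, I would draw an oriented circle in the plane, place $|C|$ small under-strands on it in the cyclic order $\prec$, and for each $c$ attach a short over-strand in the arc indexed by $\alpha(c)$, crossed with the under-strand $u(c)$ with sign $\e(c)$. It remains to connect, for each $c\in C$, the two dangling endpoints of the over-strand at $\alpha(c)$ to the endpoints of the under-strand at $u(c)$. Do this by arbitrary arcs in the plane, declaring every unavoidable intersection to be a virtual crossing. This yields an element of $\vD$, hence of $\wT$.

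\emph{The point I expect to be delicate} is showing that this construction is independent of the arbitrary routing of connecting arcs. This is exactly the \emph{detour lemma} of Kauffman, a standard fact in virtual knot theory: any two arcs in $\R^2$ with the same endpoints, meeting the rest of the diagram only transversely and only at virtual crossings, are related by a finite sequence of vR1, vR2, vR3 and mR3 moves. Applied strand by strand, this implies that the welded-torus class of $\Psi(C,\prec,\varphi)$ depends only on the welded Gauss diagram. Once this is in hand, the compositions are essentially tautological: $\Phi\circ\Psi=\Id$ holds by construction (the cyclic order, the arc-assignment and the signs are built in), while $\Psi\circ\Phi=\Id$ follows by comparing a diagram $D$ with the reconstruction $\Psi(\Phi(D))$: the classical crossings and their cyclic under-order match, the connecting arcs differ by a detour and thus by vReid and mReid moves, and any residual discrepancy in the order of consecutive over-preimages inside a single arc is absorbed by OC. Together with injectivity provided by $\Phi$, this gives the claimed bijection.
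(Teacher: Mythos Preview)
Your proposal is correct and takes essentially the same approach as the paper: record, for each classical crossing, its sign and the arc between consecutive under-preimages that contains its over-preimage, check invariance under the defining moves, and build an inverse. The only differences are cosmetic---you label arcs by the first under-preimage \emph{after} $o(c)$ rather than the paper's last one \emph{before}, and you spell out the inverse explicitly via the detour lemma and OC where the paper simply defers to the known correspondence between welded diagrams and Gauss diagrams up to tail commutes.
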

\begin{proof}
  To a virtual diagram $D$, we associate a welded Gauss diagram as follows.
When running along $D$, every crossing is met twice, once on the highest strand and once on the lowest one.
  So first, we consider $C$, the set of classical crossing of $D$, ordered by the order in which the crossings are met on the lowest strand while running positively along $D$.
  Then, to $c\in C$, we associate the sign of $c$ and the last crossing met on the lowest strand, while running positively along $D$, before meeting $c$ on the highest strand.

  It is straightforwardly checked that this is invariant under virtual Reidemeister, mixed Reidemeister and over commute moves and that it defines a one-to-one map from $\wT$ to $\wGD$.
This last statement can be verified by hand or, for readers who know about usual Gauss diagrams, by defining an inverse map, using item (\ref{item:GD}) in Remark \ref{rk:GD}  and the fact that Gauss diagrams up to tail commute moves are in one-to-one correspondance with welded diagrams.
See Figure \ref{fig:DwGDGD} for an illustration.
\end{proof}

\begin{figure}
  \[
  \xymatrix{
    \dessin{3cm}{Diag} \ar[rr] &&
    {\begin{array}{ccl}
      a & \mapsto &(c,+)\\
      \rotatebox{90}{$>$}&&\\
      b & \mapsto &(c,+)\\
      \rotatebox{90}{$>$}&&\\
     c & \mapsto &(b,-)\\
      \rotatebox{90}{$>$}&&\\
     d & \mapsto &(e,-)\\
      \rotatebox{90}{$>$}&&\\
     e & \mapsto &(b,-)\\
      \rotatebox{90}{$>$}&&\\
     f & \mapsto &(b,+)\\
      \rotatebox{90}{$>$}&&\\
     a &&
    \end{array}}
\ar[rr]&&\dessin{3.3cm}{GDiag} \ar `d []+<0cm,-3.2cm> `[llll] [llll]
}
  \]
  \caption{Diagrams, welded Gauss diagrams and Gauss diagrams}
  \label{fig:DwGDGD}
\end{figure}
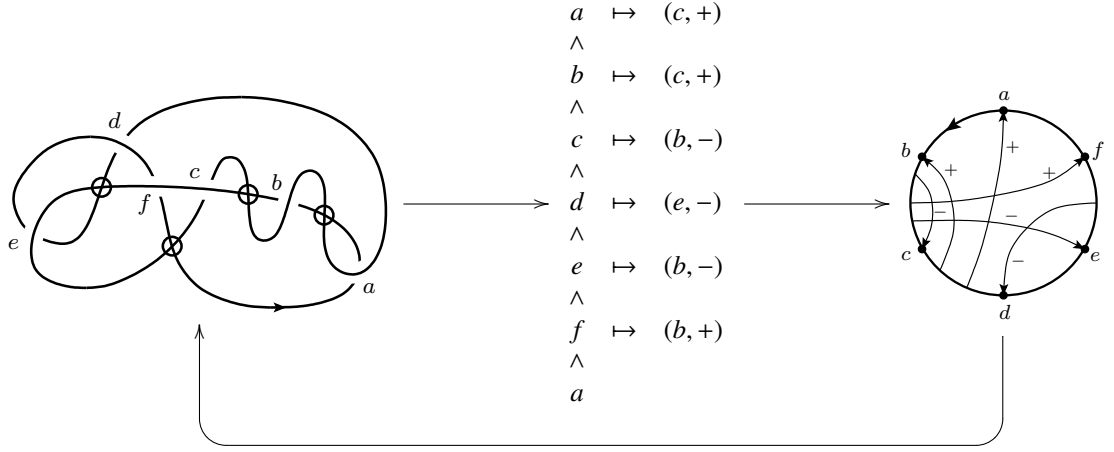

\begin{remarques}\label{rk:GD}$\ $
  \begin{enumerate}
  \item It is quite straightforward to define relevant moves on welded
    Gauss diagrams, denoted eponymously by Reid in Figure
    \ref{fig:Summary}, that correspond to classical Reidemeister
    moves. The quotient is one-to-one with $\wK$, but since we shall
    not need them in this paper, we refer the interested reader to
    \cite[Section 4.1]{WAMB} for a formal definition.
  \item\label{item:GD} To a welded Gauss diagram $G:C\to C\times \{\pm1\}$, one can associate a more traditional Gauss diagram (see \cite{PV,GPV,Fiedler,WKO1} for definitions and Figure \ref{fig:DwGDGD} for an illustration) by
    \begin{itemize}
    \item using the cyclic order on $C$ to mark one point for each element in $C$ on an ordered circle;
    \item between the point marked by $c\in C$ and its direct successor on the oriented circle, marking one more point for each preimage in $G^{-1}\big(c,\pm1\big)$, no matter in which order;
    \item for every $c\in C$, drawing arrows from the point marked by $c'$ to the point marked by $c$ with sign $\varepsilon$ where $(c',\varepsilon)=G(c)$.
    \end{itemize}
  \end{enumerate}
\end{remarques}

%%% Local Variables: 
%%% mode: latex
%%% TeX-master: "OnTubeMap"
%%% End: 

% Chapitre 2
\section{The Tube map}
\label{sec:tube-map}

In this section, we define some maps between ribbon and welded objects.
Broken objects shall appear as an in-between.

First, we define $\tB:\vD\to\bD$ as follows.
Let $D$ be a virtual diagram and consider it as a singular circle lying in $\R^2\times\{0\}\subset\R^3$.
Now, consider the boundary of a neighborhood of it in $\R^3$.
This is an handlebody whose boundary is the union of 4--punctured spheres, one for each crossing, attached along their boundaries.
Every such punctured sphere is hence decorated by a partial order.
We define $\tB(D)$ to be the broken torus diagram obtained by modifying locally each punctured sphere according to its partial order as shown in Figure \ref{fig:Inflating}.
It is oriented by the ambient orientation of $\R^3$ and co-oriented by the orientation of $D$.
\begin{figure}
\[
\xymatrix@C=1.35cm@R=-2cm{
&&\vcenter{\hbox{\rotatebox{45}{$\dessin{2.5cm}{Infl_1}$}}}\\
\vcenter{\hbox{\rotatebox{45}{$\dessin{2.5cm}{Infl_v}$}}}
&\vcenter{\hbox{\rotatebox{45}{$\dessin{2.5cm}{Infl}$}}}\ar[l]|(.48){\dessin{.8cm}{Cvir}}\ar[ru]|(.48){\dessin{.8cm}{Cpos}}\ar[rd]|(.48){\dessin{.8cm}{Cneg}}&\\
&&\vcenter{\hbox{\rotatebox{45}{$\dessin{2.5cm}{Infl_0}$}}}
}
\]
  \caption{Inflating classical and virtual crossings}
  \label{fig:Inflating}
\end{figure}
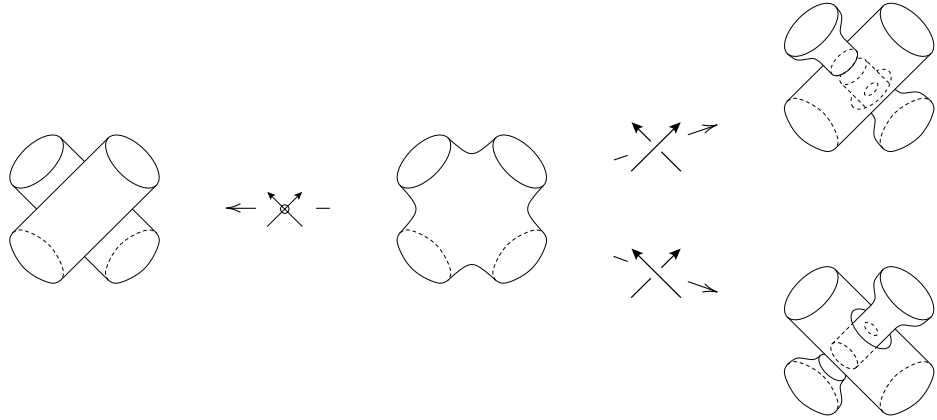

\begin{remarque}\label{rem:CrossChg}
  The piece of broken torus diagram associated to a classical crossing does not depend only on the crossing sign but also on the actual orientation. Indeed, reversing the orientation preserves signs but flips upside down Figure \ref{fig:Inflating}.
For instance, depending on the orientation, a positive crossing can be sent to
\[\vcenter{\hbox{\rotatebox{45}{$\dessin{1.8cm}{Infl_1}$}}}\hspace{1cm}\textrm{or}\hspace{1cm}\vcenter{\hbox{\rotatebox{45}{$\dessin{1.8cm}{Infl_1_alt}$}}},
\]
\noi which are not isotopic when fixing the boundaries.
In particular, reversing the orientation of a diagram does not just reverse, in general, the co-orientation of the associated broken torus diagram. To realize such an operation, the orientation should be reversed, but some virtual crossings should also be added before and after the crossing, and the sign of the crossing should moreover be switched.
As a matter of fact, $\dessin{.6cm}{Cpos2}$ and $\dessin{.6cm}{CTwist2}$ are send to similar pieces of broken torus diagram with opposite co-orientations. This observation shall also follow from the behavior of the map W defined below under the co-orientation reversal.
\end{remarque}

Now, we define $\tR:\bD\to\rT$ as follows.
Let $T$ be a broken torus diagram and consider it as lying in $\R^3\times\{0\}\subset S^4$.
Then, near each singular circle, the thinnest tube can be pushed slightly above or below in the fourth dimension, depending on the order associated to this singular circle.
The torus is then embedded in $S^4$.
But a symmetric broken torus diagram comes naturally with a filling by a solid torus and the pushing can be performed so that every pair of associated singular circles leads to a single ribbon disk singularity.
We define $\tR(T)$ to be this immersed solid torus with the bi-orientation induced by the one of $T$. 

\begin{lemme}
  The maps $\tB$ and $\tR$ descend to well-defined maps $\xB:\wT\to\bT$ and $\xR:\bT\to\rT$. 
\end{lemme}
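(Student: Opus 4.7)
The statement splits into two independent claims, one for $\xB$ and one for $\xR$. In both cases the maps $\tB$ and $\tR$ are defined by purely local recipes (applied crossing by crossing for $\tB$, and singular-circle by singular-circle for $\tR$), so I would handle the invariance generator by generator and move by move, each time working inside a single ball where the move takes place and leaving the rest of the picture untouched.

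For $\xB \colon \wT \to \bT$, I would check that for each of the generators \textrm{vR1}, \textrm{vR2}, \textrm{vR3}, \textrm{mR3a}, \textrm{mR3b} and \textrm{OC}, the two local broken torus diagrams produced by $\tB$ are related in $\bT$ by a finite sequence of virtual moves V together with ambient isotopy in $\R^3$. The virtual Reidemeister moves \textrm{vReid} involve only virtual crossings: both sides of $\tB$ then consist of tubes meeting along non-singular planar circles, and one recognises the three cases as direct instances of V plus planar isotopy. The mixed moves \textrm{mReid} let a classical crossing glide past virtual ones; here I would start from one side, isotope the classical piece through the region, and absorb the residual non-singular tube intersections by V moves. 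The most instructive case is \textrm{OC}: both classical crossings share the same over-strand, so $\tB$ yields two pairs of singular circles lying on a common over-tube, separated along its core by pieces of under-tube; the commute of the two crossings is then realised as a genuine ambient isotopy of the over-tube in $\R^3$ that slides one pair past the other, with no V move required. In every case one must also confirm that the induced orientations and co-orientations match on both sides, which is where I expect the most tedious bookkeeping.

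For $\xR \colon \bT \to \rT$, I would check that a single V move between $T,T' \in \bD$ produces ambient isotopic ribbon tori $\tR(T)$ and $\tR(T')$ in $S^4$. Inside the ball where the V move occurs there are no singular circles on either side, so no ribbon disks are created by $\tR$; the recipe of pushing the thinnest tube into the fourth coordinate just separates the two tubes along a disjoint union of annuli. In both configurations these annuli are properly embedded in the 4-ball and meet its boundary in the same link, so an elementary 4-dimensional isotopy carries one to the other, and since $\tR$ is otherwise identical on the complement of the ball, this local isotopy extends to an ambient isotopy of $S^4$. Again, a last check is that the bi-orientations inherited from the two sides of the V move coincide under this isotopy.

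The main obstacle I anticipate is the \textrm{OC} case and the mixed moves: not the topology itself, which is clear once the local pictures are drawn, but the correct treatment of orientations (as already signaled by Remark \ref{rem:CrossChg}), so that the behavior of $\tB$ under crossing changes and co-orientations is consistent with the conclusion that every generator lifts to a V-equivalence of broken torus diagrams. Once these local verifications are done for all generators, both well-definedness statements follow.
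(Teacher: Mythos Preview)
Your plan is correct and follows the same route as the paper: check $\tB$ against each generator of $\wT$ up to V moves and ambient isotopy, and check $\tR$ against the single V move via an isotopy in $S^4$. The paper is a touch more economical on the first half (it notes that vR1 and OC already yield isotopic broken diagrams with no V move, while for the remaining generators one uses V to pull one tube entirely in front of the rest before comparing), and for the second half it does not argue abstractly but simply exhibits the required isotopy as an explicit sequence of movies (Figure~\ref{fig:V_isotopy}).

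One spot in your $\xR$ argument is too quick. Your inference ``the annuli meet the boundary of the $4$--ball in the same link, so an elementary $4$--dimensional isotopy carries one to the other'' is not a valid general principle: properly embedded surfaces in $B^4$ with the same boundary need not be isotopic rel boundary. What makes it work here is the specific geometry of the V move --- two disjoint trivial tubes in $B^3\times\{0\}$ that differ only by which one sits in front --- for which the isotopy is the standard trick of pushing one tube into the fourth coordinate, sliding it through the position of the other, and returning. That is precisely the content of Figure~\ref{fig:V_isotopy}, so you should either construct or name this isotopy rather than appeal to an abstract principle that fails in general.
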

\begin{proof}
  To prove the statement on $\tR$, it is suficient to prove that two broken torus diagrams which differ from a virtual move have the same image by $\tR$. Figure \ref{fig:V_isotopy} gives an explicit isotopy showing that. Another projection of this isotopy can found at \cite{Ester}

  Concerning $\tB$, it sends both sides of moves vR1 and OC on broken torus diagrams which are easily seen to be isotopic.
  For any of the other virtual and mixed Reidemeister moves, some preliminary virtual moves may be needed before the broken torus diagrams become isotopic.
  For instance, among the represented strands, one can choose one with two virtual crossings on it and, up to the virtual move, pull out its image by $\tB$ in front of the rest; then both sides of the move are sent to isotopic broken torus diagrams.
  \end{proof}
\begin{nota}
  We denote by $\xT:\wT\to\rT$ the composite map $\xR\circ\xB$. 
\end{nota}

The map $\xT$ is actually a bijection.
To prove this, we introduce $\xW:\rT\to\wGD\cong\wT$ defined as follows.
Let $T$ be a ribbon torus and $\tT$ its abstract solid torus.
We denote by $C$ the set of ribbon disks of $T$.
Each $\delta\in C$ has two preimages in $\tT$: one with essential boundary in $\p\tT$, that we shall call \emph{essential preimage} and denote by $\delta_\textrm{ess}$; and the other one inside the interior of $\tT$, that we shall call \emph{contractible preimage} and denote by $\delta_0$.
See Figure \ref{fig:TildeT} for a picture.
The co-orientation of $T$ induces a cyclic order on the set of essential preimages and hence on $C$.
Moreover, essential preimages cut $\tT$ into a union of filled cylinders.
For each $\delta\in C$, we define $h(\delta)$ as the element of $C$ such that $\delta_0$ belongs to the filled cylinder comprised between $h(\delta)_\textrm{ess}$ and its direct successor.
To $\delta$, we also associate $\varepsilon(\delta)\in\{\pm1\}$ as follows.
Let $x$ be any point in the interior of $\delta$, $x_\textrm{ess}$ its preimage in $\delta_\textrm{ess}$ and $x_0$ its preimage in $\delta_0$.
Let $(u,v,w)$ be the image in $T_xS^4$ of a positive basis for $T_{x_0}\tT$ and $z$ the image in $T_xS^4$ of a normal vector for $\delta_\textrm{ess}$ at $x_\textrm{ess}$ which is positive according to the co-orientation of $T$.
Then $(u,v,w,z)$ is a basis of $T_xS^4$ and we set $\varepsilon(\delta):=1$ if it is a positive basis and $\varepsilon(\delta):=-1$ otherwise.
See Figure \ref{fig:TildeT} for a picture.
We define $\xW(T)$ to be the welded torus corresponding to the welded Gauss diagram $G:C\to\{\pm1\}\times G$ defined by $G(\delta)=\big(\varepsilon(\delta),h(\delta)\big)$.

\begin{figure}
  \[
  \dessin{5cm}{tildeT}
  \]
  \caption{A picture for $\tT$}
  \label{fig:TildeT}
\end{figure}

\begin{prop}\label{prop:ToriBij}
  The map $\xT:\wT\to\rT$ is a bijection.
\end{prop}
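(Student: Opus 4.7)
The natural plan is to prove that the map $\xW : \rT \to \wT$ already introduced is a two-sided inverse of $\xT$. The map $\xT = \xR \circ \xB$ goes $\wT \to \bT \to \rT$, and $\xW$ sends any ribbon torus directly to a welded Gauss diagram, which the preceding proposition identifies with an element of $\wT$. The strategy is thus to verify $\xW \circ \xT = \Id_{\wT}$ and $\xT \circ \xW = \Id_{\rT}$ separately, then to reduce both to a routine inspection of the local pictures once a preferred position for ribbon tori is secured.

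For $\xW \circ \xT = \Id_{\wT}$, I would start from a virtual diagram $D$, consider its associated welded Gauss diagram $G_D : C \to C \times \{\pm 1\}$ on the set $C$ of classical crossings, and compute $\xW(\xT(D))$. The image $\xB(D) \subset \R^3$ is built by inflating $D$, so its ribbon disks are in canonical bijection with $C$. The co-orientation of $\xT(D)$ is induced by the orientation of $D$, so the cyclic order on essential preimages inherited from the co-orientation coincides with the cyclic order in which crossings are encountered along the lowest strand of $D$. The assignments $h$ and $\varepsilon$ can then be matched term by term with $G_D$ using the local models in Figure \ref{fig:Inflating}: the contractible preimage of a classical crossing $c$ is precisely the arc through the cylinder cut out by the essential preimages, and the sign computation in $T_xS^4$ reduces, because of how $\xR$ pushes the thinnest tube in the fourth coordinate, to the usual sign of $c$.

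The real work lies in $\xT \circ \xW = \Id_{\rT}$, and this is the main obstacle: one must show that a ribbon torus is recovered, up to ambient isotopy in $S^4$, from the purely combinatorial data of its welded Gauss diagram. The plan is to put any ribbon torus $T$ into a normal form that makes it visibly equal to $\xT$ of some virtual diagram. Using the local flatness hypothesis and the parametrization $B^3 \times [0,1]$ around each ribbon disk described before Figure \ref{fig:SingFly}, I would isotope $T$ into $\R^3 \subset S^4$ as a symmetric broken torus diagram, choosing the projection direction generically so that only transverse flying-disk singularities appear. This produces an element of $\bD$ whose class in $\bT$ is $\xR^{-1}$-preimage of $T$; the discussion preceding the proposition shows that $\xR$ is well-defined and, by the argument of Figure \ref{fig:V_isotopy}, independent of the virtual-move class, so the inverse at the level of $\bT$ is canonical.

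It then remains to lift from $\bT$ to $\wT$. The essential preimages of the ribbon disks cut the solid torus $\tT$ into filled cylinders, and the combinatorics $(h,\varepsilon)$ recorded by $\xW(T)$ encodes exactly which cylinder each contractible preimage sits in and with which local crossing sign. Reading the boundary circles of these cylinders in cyclic order gives an oriented virtual diagram $D$ (the $\wGD \cong \wT$ correspondence), and by construction $\xB(D)$ is isotopic to the chosen broken torus diagram up to the virtual move V, since the ambiguity in routing the connecting arcs between successive ribbon disks in $\R^3$ is exactly absorbed by vReid, mReid and OC. Thus $\xT(D) = T$ in $\rT$, completing the proof. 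Injectivity and surjectivity are simultaneous consequences; alternatively, one verifies separately that $\xT$ is surjective (every ribbon torus admits a broken-diagram projection) and injective (two virtual diagrams with the same welded Gauss diagram differ by virtual and OC moves).
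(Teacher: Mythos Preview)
Your treatment of $\xW\circ\xT=\Id_{\wT}$ is fine and agrees with the paper's one-line remark that this is ``straightforwardly checked.''

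The gap is in your argument for $\xT\circ\xW=\Id_{\rT}$. You write that, using local flatness and the $B^3\times[0,1]$ parametrization near each ribbon disk, you would ``isotope $T$ into $\R^3\subset S^4$ as a symmetric broken torus diagram, choosing the projection direction generically so that only transverse flying-disk singularities appear.'' But this is precisely the hard step, and genericity alone does not deliver it. The local model near a ribbon disk does project as in Figure~\ref{fig:SingFly}, but the remaining pieces of the ribbon solid torus are embedded $3$--balls in $S^4$, and nothing guarantees that a generic projection of these into a $3$--ball is embedded, nor that after rectifying them you can match the framings at the ribbon-disk ends without introducing \emph{wens}. A generic projection of a torus in $S^4$ yields a broken surface, but not in general a \emph{symmetric} broken torus diagram. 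Your outline therefore assumes the surjectivity of $\xR:\bT\to\rT$ as if it were a transversality fact, whereas this is the substantive content of the proposition.

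The paper is explicit about this: it attributes surjectivity of $\xT$ to Yanagawa and Kanenobu--Shima, calling that the more involved route, and then supplies an alternative argument that proves instead the injectivity of $\xW$. That argument fixes neighbourhoods of all ribbon disks inside a common $4$--ball $B_*$ (possible only because $\xW(T_1)=\xW(T_2)$), observes that in the complementary ball both tori become disjoint unions of embedded $3$--balls attached along the same ribbon-disk ends, projects these $3$--balls into $B^3$ using only the much weaker input that flat $3$--balls in $B^4$ admit an embedded projection, and then explicitly confronts the wens created at the attaching ends --- showing they arise in even number on each component and cancel pairwise (Corollary~\ref{cor:square}). The final identification is a crossing-change argument on the resulting string links via the isotopy of Figure~\ref{fig:V_isotopy}. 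Your proposal contains none of this wen discussion, and without it (or without invoking the cited literature) the normal-form step does not go through.
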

\begin{proof}
  It is straightforwardly checked that $\xW\circ\xT=\Id_{\wT}$.
  It is hence sufficient to prove that either $\xT$ is surjective or $\xW$ is injective.
  It follows from the work of Yanagawa in \cite{Yanagawa} and Kanenobu--Shima in \cite{KS} that $\xT$ is surjective.
  More detailed references can be found in the proof of Lemmata 2.12 and 2.13 of \cite{WAMB}.
  Nevertheless, we shall sketch here an alternative proof which only uses a much less involved result --- namely, that flatly embedded 3--balls in $B^4$ can be put in a position so they project onto embedded 3--balls in $B^3$ --- and shows instead that $\xW$ is injective.

  Let $T_1$ and $T_2$ be two ribbon tori such that $\xW(T_1)=\xW(T_2)$. There is hence a bijection $\psi:C_1\to C_2$, where $C_i$ is the set of ribbon disks of $T_i$, which preserves the cyclic orders induced by the co-orientations.
  We consider $\tT_1$ and $\tT_2$ the abstract solid tori of $T_1$ and $T_2$.
Let $\delta\in C_1$, we extend $\delta_0\subset \tT_1$ into a disk $\delta'_0$ which is disjoint from the other disk extensions and from the essential preimages, and whose boundary is an essential curve in $\p\tT_1$.
Moreover, the co-orientation of $T_1$ on $\delta_\textrm{ess}$ and $\delta'_0$ provide two normal vectors for $\delta$. We choose $\delta'_0$ so that, up to isotopy, these vectors coincide.
Note that, in doing such an extension of contractible preimages, we fix an arbitrary order on the contractible preimages which are in the same filled cylinder; and, more globally, a cyclic order on the union of essential and contractible preimages.
Now, for every ribbon disk $\delta\in C_1$, we consider a small neighborhood $V(\delta)$ of the image of $\delta'_0$ in $S^4$. Locally, it looks like in Figure \ref{fig:SingFly} and $V(\delta)\cap T_1$ is the union of four disks that we shall call the \emph{ribbon disk ends} of $\delta$.
By considering a tubular neighborhood of a path which visits each $V(\delta)$ successively but avoids $T_1$ otherwise, we extend $\psqcup_{\delta\in C_1}V(\delta)$ into a 4--ball $B_*\subset S^4$ which meets $T_1$ only in $\psqcup_{\delta\in C_1}V(\delta)$.

Similarly, we extend the contractible preimages of $T_2$ in such a way that the global cyclic order on all preimages corresponds, {\it via} $\psi$, to the one of $T_1$ --- this is possible only under the assumption that $\xW(T_1)=\xW(T_2)$ --- and we perform an isotopy on $T_2$ so $T_2\cap B_*=T_1\cap B_*$ with ribbon disks identified {\it via} $\psi$.
In particular, ribbon disk ends of $T_1$ and $T_2$ coincide.
Outside $B_*$, $T_1$ and $T_2$ are now both disjoint unions of embedded 3--balls which associate pairwise all the ribbon disk ends in the same way.
We put them in a position in $B_C:=S^4\setminus\mathring{B}_*\cong B^4$ so that they project onto disjoint embedded 3--balls in $B^3$.
The fact that each 3--ball is twice attached by disks to $\p B_*$ may introduce several wens, but since all ribbon disk ends can be oriented coherently using the co-orientation, for instance, of $T_1$, there are an even number of them on each component and, as we shall see in the next section, they cancel pairwise.
The projections are then the tubular neighborhoods of two string links in $B^3$ on which the local isotopy shown in Figure \ref{fig:V_isotopy} allows us to perform crossing changes.
It follows that $T_2\cap B_C$ can be transformed into $T_1\cap B_C$, so $T_1=T_2$ in $\rT$. 
The map $\xW$ is then injective.
\end{proof}

Welded tori are then a faithfull combinatorial description of ribbon tori.
Since a contraction-to-the-core inverse shows easily that the map $\xB:\wT\to\bT$ is surjective, broken tori are also in one-to-one correspondence with ribbon and welded tori.

\begin{prop}
  The map $\xT$ descends to a well-defined surjective map $\Tube:\wK\to\rK$. 
\end{prop}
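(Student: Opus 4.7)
The plan is to handle the two assertions separately: first well-definedness (i.e.\ that $\xT$ factors through the two quotients $\wT\twoheadrightarrow\wK$ and $\rT\twoheadrightarrow\rK$), then surjectivity. Since $\wK=\fract{\wT}/{\textrm{Reid}}$ and $\rK=\fract{\rT}/{\p}$, well-definedness amounts to showing that if two welded tori $T_1,T_2\in\wT$ differ by a single classical Reidemeister move, then $\xT(T_1)$ and $\xT(T_2)$, viewed as ribbon tori in $S^4$, bound the same oriented embedded torus up to ambient isotopy, i.e.\ $\xT(T_1)\stackrel{\p}{\simeq}\xT(T_2)$.

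The strategy for the four Reidemeister moves is to work first at the level of broken torus diagrams via $\xB$ and then transfer to $\rT$ via $\xR$. First I would handle R2, R3a and R3b: in each case I expect the two broken torus diagrams $\xB(T_1)$ and $\xB(T_2)$ to be related by an ambient isotopy of the immersed solid torus in $\R^3$ (once the erasures are transformed appropriately, using Figure \ref{fig:V_isotopy} locally to trade crossing information), so that actually $\xT(T_1)=\xT(T_2)$ already in $\rT$. This is strictly stronger than what is required for the descent to $\rK$, and these three moves should therefore not cause trouble.

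The main obstacle is Reidemeister move R1. The two sides of R1 produce broken torus diagrams whose fillings are genuinely non-isotopic: one carries an extra ``full twist'' in the solid torus around a meridional disk that the other does not, so $\xT(T_1)\neq\xT(T_2)$ in $\rT$. However, the boundary embedded tori should agree up to ambient isotopy in $S^4$. I would argue this by exhibiting the difference between the two fillings as a local modification supported in a small 4--ball, expressible as the insertion/deletion of a pair of wens along a meridional circle of the solid torus; this is precisely the type of filling change that preserves the ambient boundary torus and that will be treated systematically using the wen machinery of the last section. The upshot is $\xT(T_1)\stackrel{\p}{\simeq}\xT(T_2)$, so the map descends to a well-defined $\Tube:\wK\to\rK$.

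Finally, surjectivity is essentially free given what has been established. By Proposition \ref{prop:ToriBij}, $\xT:\wT\to\rT$ is a bijection, in particular surjective; composing with the canonical surjection $\rT\twoheadrightarrow\rK=\fract{\rT}/{\p}$ yields a surjection $\wT\to\rK$, which, by the well-definedness just proved, factors as $\wT\twoheadrightarrow\wK\stackrel{\Tube}{\longrightarrow}\rK$. The second arrow is therefore surjective, completing the proof.
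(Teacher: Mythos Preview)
Your treatment of R2 and R3 contains a genuine error. You claim that for these moves the broken torus diagrams $\xB(T_1)$ and $\xB(T_2)$ are related by ambient isotopy (plus virtual moves), so that $\xT(T_1)=\xT(T_2)$ already in $\rT$. This cannot be true: by Proposition~\ref{prop:ToriBij} the map $\xT:\wT\to\rT$ is a \emph{bijection}, and the two sides of an R2 or R3 move are distinct elements of $\wT$ (they have different numbers of classical crossings, hence different welded Gauss diagrams). Their images in $\rT$ are therefore distinct ribbon tori. Concretely, an R2 move removes two ribbon disks from the filling, and neither an isotopy of symmetric broken diagrams nor the virtual move of Figure~\ref{fig:V_isotopy} (which only changes which tube is ``in front'' in $\R^3$, not the ribbon singularities) can do that. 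So for R2 and R3, just as for R1, what must be shown is the weaker statement $\xT(T_1)\stackrel{\p}{\simeq}\xT(T_2)$: the two ribbon tori have the same boundary but genuinely different fillings.

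The paper proves this uniformly for all three broken Reidemeister moves by exhibiting, in Figures~\ref{fig:FillChg_RI}--\ref{fig:FillChg_RIII}, explicit $4$--dimensional movies: for each move one writes the same embedded torus in $S^4$ as the boundary of two different immersed solid tori, one projecting to each side of the move. Your idea of handling R1 via wens is valid and appears in the paper as an \emph{alternative} argument (Figure~\ref{fig:RI_isotopy} in Section~\ref{sec:digression-wens}); but you still owe a correct argument for R2 and R3, and wens will not help there. Your surjectivity argument is fine and matches the paper's.
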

\begin{proof}
  It is sufficient to prove that any two broken tori which differ by a broken Reidemeister move only are sent through $\xT$ to ribbon tori which can be realized with same boundary (but different fillings). This is done in Figure \ref{fig:FillChg_RI}, \ref{fig:FillChg_RIb}, \ref{fig:FillChg_RII} and \ref{fig:FillChg_RIII}. There, the upper lines give 4--dimensional movies of filled tubes which project along the height axis ---time becomes then the new height
  parameter--- to the pieces of broken torus diagram shown on Figure \ref{fig:BrokenReid} as the left hand sides of the broken moves; and the lower lines give another fillings of the very same tubes, which, after a suitable deformation, project along the height axis to the right hand sides of the same broken moves. Movies are hence read from left to right, one line out of two. For instance, line 1 continues on line 3. For Reidemeister moves II and III, we have dropped the left--right motion of the disks since it plays no role, complicates uselessly the pictures and can be easily added in the reader's mind.

Surjectivity is immediate by Proposition \ref{prop:ToriBij}.
\end{proof}

Now, we can address the question of the Tube map injectivity.
It is directly checked that classical Reidemeister moves on welded tori are in one-to-one correspondance with broken Reidemeister moves on broken tori. It follows that the notions of welded and broken knots do coincide.
So, basically, it remains to understand whether Reidemeister moves on broken tori are sufficient to span the whole $\p$--equivalence relation on $\rT$.
As noticed in \cite{Winter}, the answer is no.
Indeed, since it forgets everything about the filling, the $\p$--equivalence is insensitive to a change of co-orientation. However, as noted in Remark \ref{rem:CrossChg}, it is not sufficient to reverse the orientation of a welded torus for its image under $\xB$ to be identical but with reversed co-orientation: one also need to reverse all the crossing signs and add virtual crossings on both sides of each classical crossing.
\begin{nota}\label{nota:OR}
  For any virtual diagram $D$, we define $-D$ to be the virtual diagram obtained by reversing the orientation of $D$ and by $\overline{D}$ the diagram obtained by applying to each classical crossing of $D$ the operation described in Figure \ref{fig:SignRev}.
These operations on $\vD$ factor through the quotients $\wT$ and $\wK$.
From the welded Gauss diagram point of view, the second is nothing but to the reversal of all signs.\\
We define a \emph{global reversal} as the simultaneous reversing of both the orientation and the signs; it is denoted by glRev in Figure \ref{fig:Summary}.
\end{nota}
\begin{figure}
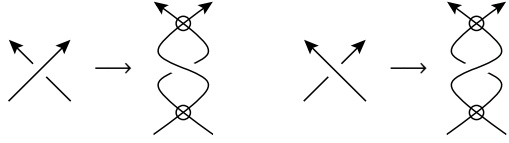

  \[
  \dessinH{1cm}{Cpos}\ \longrightarrow\ \dessinH{1cm}{SRpos}
  \hspace{1cm}
  \dessinH{1cm}{Cneg}\ \longrightarrow\ \dessinH{1cm}{SRneg}
  \]
  \caption{Signs reversal on virtual diagrams}
  \label{fig:SignRev}
\end{figure}

It follows from the discussion above that:
\begin{prop}\label{prop:GlRevInv}
  The $\Tube$ map is invariant under global reversal, that is $\Tube(K)=\Tube(-\overline{K})$ for every welded knot $K$.
\end{prop}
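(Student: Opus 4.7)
The strategy is to show that $\xT(K)$ and $\xT(-\overline{K})$ are identical as immersed oriented solid tori in $S^4$, differing only in their co-orientation, and then invoke the fact that the $\Tube$ map factors through the quotient $\rK = \fract{\rT}/{\partial}$, which discards the entire filling (and in particular its co-orientation) and retains only the boundary torus as an oriented embedded surface. Since the co-orientation is the only discrepancy, the two images necessarily coincide in $\rK$.

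The first task is to establish, at the level of broken tori, that $\xB(-\overline{K})$ and $\xB(K)$ agree in $\bT$ as oriented broken torus diagrams but carry opposite co-orientations. This is a purely local verification, carried out crossing by crossing, and is essentially the content of Remark~\ref{rem:CrossChg}: at each classical crossing, the combined operation of reversing the orientation, flipping the sign, and sandwiching the crossing between two virtual crossings (which is precisely what the pair of operations $D \mapsto \overline{D} \mapsto -\overline{D}$ does locally) is sent by $\tB$ to the very same local piece of broken torus diagram as the original crossing, except with reversed co-orientation. The extra virtual crossings introduced by $\overline{\cdot}$ pose no difficulty because, under $\tB$, they produce pieces of broken torus diagram that are identified by the virtual move V, and we work in $\bT = \fract{\bD}/{\textrm{V}}$. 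Assembling these local identifications along the whole diagram yields $\xB(-\overline{K}) = \xB(K)$ in $\bT$ up to co-orientation.

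Applying $\xR$ to this identity is now automatic: the construction of $\xR$ only pushes thinnest tubes above or below in the fourth coordinate according to the ordering of broken singular circles, an operation that is entirely insensitive to the co-orientation. Consequently $\xT(-\overline{K})$ and $\xT(K)$ are the same immersed oriented solid torus in $S^4$, equipped with opposite co-orientations. Passing to $\rK$ through the $\partial$-quotient erases the filling and with it the co-orientation datum, so $\Tube(K) = \Tube(-\overline{K})$, as claimed. The only delicate point in this plan is the bookkeeping of the local identification in Remark~\ref{rem:CrossChg}, but this amounts to inspecting the four oriented-crossing pictures of Figure~\ref{fig:Inflating} together with Figure~\ref{fig:SignRev} and observing, by direct comparison, that the broken torus pieces match up after the virtual moves are absorbed.
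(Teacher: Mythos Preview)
Your proposal is correct and follows essentially the same approach as the paper. The paper presents the proposition as an immediate consequence of the preceding discussion (``It follows from the discussion above that:\dots''), and that discussion is precisely what you have spelled out: by Remark~\ref{rem:CrossChg}, the operations $D\mapsto-\overline{D}$ send a welded torus to one whose image under $\xB$ differs only by a co-orientation reversal, and since the $\partial$--equivalence defining $\rK$ forgets the filling (hence the co-orientation), the images under $\Tube$ coincide.
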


%%% Local Variables: 
%%% mode: latex
%%% TeX-master: "OnTubeMap"
%%% End: 

% Chapitre 3
\section{A digression on wens}
\label{sec:digression-wens}

In this section, we focus on a particular portion of ribbon torus-knot, called wen in the literature.
A detailed treatment of them can be found in \cite{KS}, \cite[Sec. 2.5.4]{WKO1} and \cite[Sec. 4.5]{WKO2}.
In general, a wen is an embedded Klein bottle cut along a meridional circle.
Locally, it can be described as a circle which makes a half-turn inside a 3--ball with an extra time parameter.
\begin{defi}
  A \emph{wen} is an embedded annulus in $S^4$ which can be locally parametrized as
\[
\big(\gamma_t(C),t\big)_{t\in[0,1]}\subset B^3\times [0,1]\subset S^4
\]
\noi where $C\subset B^3$ is a fixed circle and $(\gamma_t)_{t\in[0,1]}$ a path in $\Diff(B^3;\p B^3)$, the set of smooth diffeomorphisms of $B^3$ which fix the boundary, such that $\gamma_0$ is the identity and $\gamma_1$ a diffeomorphism which sends $C$ on itself but with reversed orientation.
\end{defi}
A wen by itself is rather pointless since the reparametrization $\varphi(x,y,z,t):=\big(\gamma_t^{-1}(x,y,z),t\big)$ sends it on a trivially embedded annuli and, reciprocally, any annuli can be reparametrized as a wen.
However, as a portion of a ribbon torus-knots, we shall be interested in modifying a wen without altering the rest of the ribbon torus-knot. In this prospect, we shall consider only isotopies of $S^4$ which fix the boundary components of wens. We shall say then that the wens are \emph{$\p$--isotopic}.

Generically, a wen projects in $B^3$ into one of the four pieces of non symmetric broken torus diagram shown in Figure \ref{fig:wens}.
\begin{figure}
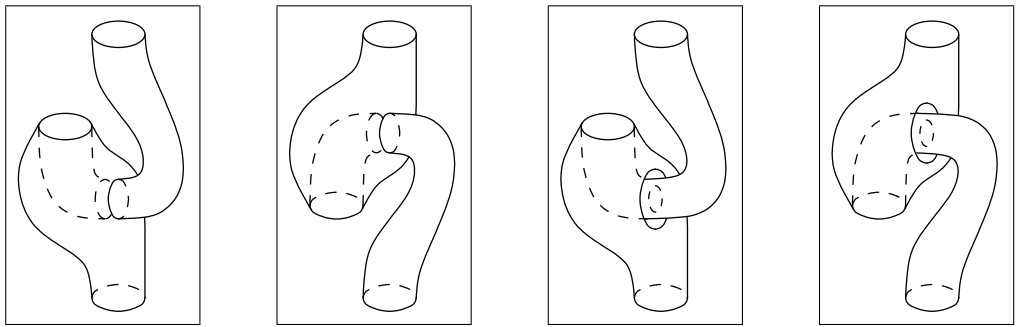

  \[
\fdessin{4cm}{Wen1}  \hspace{1cm} \fdessin{4cm}{Wen2} \hspace{1cm} \fdessin{4cm}{Wen3} \hspace{1cm} \fdessin{4cm}{Wen4}
  \]
  \caption{The four projections of wens}
  \label{fig:wens}
\end{figure}
However, we can speak of a wen unambiguously since the following result holds:
\begin{prop}\cite[Lem. 3.1]{KS}\label{prop:AllWensAreOne}
  Any two wens are $\p$--isotopic in $S^4$.
\end{prop}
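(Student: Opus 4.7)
The plan is to produce a $\p$--isotopy between two given wens $W_1$ and $W_2$ by interpolating their defining paths of diffeomorphisms in $\Diff(B^3;\p B^3)$, then invoking the parametric isotopy extension theorem. The conceptual engine is the Smale conjecture, proved by Hatcher, stating that $\Diff(B^3;\p B^3)$ is contractible.

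First, I would apply a preliminary ambient isotopy to arrange that both wens lie in a common local chart $B^3\times[0,1]\subset S^4$ around the same circle $C$, so that they share boundary $C\times\{0\}\sqcup C\times\{1\}$. Writing $\gamma_t^i$ for the path parametrizing $W_i$ and setting $G:=\Diff(B^3;\p B^3)$ together with $G_*:=\{g\in G \mid g(C)=C\textrm{ with reversed orientation}\}$, I would then seek a continuous map $H:[0,1]^2\to G$ with $H(t,0)=\gamma_t^1$, $H(t,1)=\gamma_t^2$, $H(0,s)=\Id$ and $H(1,s)\in G_*$ for all $s\in[0,1]$. The associated family of wens $W_s:=\{(H(t,s)(C),t)\mid t\in[0,1]\}$ then has constant boundary $C\times\{0\}\sqcup C\times\{1\}$, and the parametric isotopy extension theorem, applied rel $\p(B^3\times[0,1])$, promotes it to an ambient $\p$--isotopy of $S^4$ sending $W_1$ to $W_2$.

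The existence of such an $H$ follows from two homotopy--theoretic ingredients. By Hatcher's theorem, $G$ is contractible; hence the space $P$ of paths in $G$ starting at $\Id$ is contractible, and the endpoint evaluation $\textrm{ev}_1:P\to G$ is a fibration with contractible fibre. Consequently, the subspace of valid wen--parametrizations $P_*:=\textrm{ev}_1^{-1}(G_*)$ is homotopy equivalent to $G_*$, and the only obstruction to connecting $\gamma^1$ and $\gamma^2$ inside $P_*$ lies in $\pi_0(G_*)$. Since $G_*$ is a coset of the subgroup $G_+:=\{g\in G \mid g(C)=C\textrm{ preserving orientation}\}$, it is enough to show $G_+$ is connected: any $g\in G_+$ can first be isotoped through $G_+$ so that $g|_C=\Id_C$ (using that $\textrm{Diff}^+(S^1)\simeq \SO(2)$ is connected and extending appropriately), then to $\Id$ using contractibility of $\Diff(B^3;\p B^3\cup C)$.

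The main obstacle is precisely this connectedness statement on $G_+$: while intuitive, it requires nontrivial $3$--manifold input, essentially a version of the Smale conjecture applied to the exterior of $C$ in $B^3$. The remaining steps --- the initial normalization and the isotopy extension --- are formal. A more hands-on alternative, probably closer in spirit to the approach of \cite{KS}, would bypass the abstract homotopy theory by observing that any wen generically projects into one of the four pieces of Figure \ref{fig:wens} and then exhibiting explicit $\p$--isotopies between these four local models by suitable $4$--dimensional movies.
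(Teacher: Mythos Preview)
The paper does not prove this proposition; it is cited from \cite{KS}. What the paper does prove, via the Smale conjecture, is the weaker Corollary~\ref{cor:square}, and your approach is essentially a sharpening of that argument. Both rely on the same ingredients: connectedness of $\Diff^+(S^1)$ to straighten the restriction to $C$, the solid-torus complement of $C$ in $B^3$, and Hatcher's theorem in its formulations~(1) and~(9). You package this abstractly through the path-space fibration $\textrm{ev}_1\colon P\to G$ with contractible fibre $\Omega G$, reducing the whole question to $\pi_0(G_+)=0$. The paper instead builds by hand a path $(\gamma_t)$ inside $G_+$ from $\gamma^2_1\circ\gamma^1_1$ back to the identity---this is exactly your connectedness claim, argued concretely with an auxiliary arc $\lambda$ from $C$ to $\p B^3$ and formulation~(9)---then concatenates to obtain a loop in $G$ and contracts it with formulation~(1). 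Your route is more conceptual and reaches the full proposition; the paper's is more explicit but stops at the corollary, which is all it needs downstream.

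One step you call ``formal'' is not entirely so: placing both wens in a \emph{common} chart $B^3\times[0,1]$ rel their shared boundary. Each wen carries its own chart by definition, and merging them rel boundary requires a collar-uniqueness argument near the two boundary circles. This is standard but worth a sentence; the paper's corollary sidesteps the issue because stacking two wens end to end already lives in a single concatenated chart.
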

It has the following corollary, depicted in the last four pictures of Figure \ref{fig:RI_isotopy}:
\begin{cor}\label{cor:square}
  The gluing of any two wens along one of their boundary circles is $\p$--isotopic to a trivially embedded cylinder.
\end{cor}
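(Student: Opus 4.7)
The plan is to use Proposition \ref{prop:AllWensAreOne} to reduce the statement to a concrete local model in which the two wens are time-reverses of each other, and then to write down an explicit $\p$-isotopy of $S^4$ that undoes the ``round trip'' traced out in $\Diff(B^3;\p B^3)$. This is essentially a global version of the reparametrization trick already mentioned in the section for a single wen.

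First, I would place the glued annulus inside a tubular neighborhood $B^3 \times [0,2]$ so that the two outer boundary circles are $C \times \{0\}$ and $C \times \{2\}$ for a fixed circle $C \subset B^3$, and the common gluing circle sits at $t = 1$. By Proposition \ref{prop:AllWensAreOne}, each wen is $\p$-isotopic to any other wen with the same boundary circles, so I would replace the first wen by the standard model
\[
W_1 = \{(\gamma_t(C), t) : t \in [0,1]\}
\]
for a smooth path $\gamma : [0,1] \to \Diff(B^3; \p B^3)$ with $\gamma_0 = \Id$ and $\gamma_1$ reversing $C$, and the second wen by its time-reverse
\[
W_2 = \{(\gamma_{2-t}(C), t) : t \in [1,2]\}.
\]
That $W_2$ is itself a wen in the sense of the definition follows from the reparametrization $\eta_s := \gamma_{1-s}\gamma_1^{-1}$, which satisfies $\eta_0 = \Id$ and $\eta_1 = \gamma_1^{-1}$ reverses $C$.

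With this in place, the glued annulus becomes $A = \{(\gamma_{g(t)}(C), t) : t \in [0,2]\}$, where $g(t) = \min(t, 2-t)$ (slightly smoothed at $t=1$). I would then define the ambient isotopy
\[
\phi_s(x, t) = \bigl(\gamma_{s g(t)}^{-1}(x), t\bigr), \qquad s \in [0,1],
\]
on $B^3 \times [0,2]$, extended by the identity to all of $S^4$. Each $\phi_s$ is a diffeomorphism (with inverse $(x,t) \mapsto (\gamma_{sg(t)}(x), t)$), and it fixes $\p B^3 \times [0,2]$ pointwise (because every $\gamma_r$ fixes $\p B^3$) as well as $B^3 \times \{0, 2\}$ (because $g(0) = g(2) = 0$ and $\gamma_0 = \Id$), so in particular it fixes the two boundary circles of the annulus throughout. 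One checks that $\phi_0 = \Id$ while
\[
\phi_1(A) = \{(\gamma_{g(t)}^{-1}\gamma_{g(t)}(C), t) : t \in [0,2]\} = C \times [0,2],
\]
the trivial cylinder. Hence $(\phi_s)_{s \in [0,1]}$ is the desired $\p$-isotopy.

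The main obstacle I expect is the reduction step: Proposition \ref{prop:AllWensAreOne} only produces a $\p$-isotopy between wens whose boundary circles already agree, so one must first arrange a local model in which the shared gluing circle and both outer boundaries are simultaneously placed in a standard position before invoking it. This should be routine inside a sufficiently small tubular neighborhood of the annulus, but it is the step where one must be careful. A secondary technical point is the smoothing of $g$ at $t = 1$ so that $\gamma_{sg(t)}$ is smooth in $t$; this is standard.
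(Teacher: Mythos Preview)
Your route is genuinely different from the paper's. You use Proposition~\ref{prop:AllWensAreOne} to normalise the pair of wens to a path and its time-reverse, then unwind explicitly; this is the ``direct consequence of Kanenobu--Shima'' that the paper alludes to but deliberately sets aside. The paper instead gives a self-contained argument that avoids Proposition~\ref{prop:AllWensAreOne} entirely: it concatenates the two parametrising paths in $\Diff(B^3;\p B^3)$ into a single path from $\Id$ to some $\gamma^2_1\circ\gamma^1_1$ preserving $C$ with its orientation, extends this to a closed loop through diffeomorphisms preserving $C$ (using contractibility of $\Diff(S^1\times D^2;\p(S^1\times D^2))$, one formulation of the Smale conjecture), and finally contracts that loop using contractibility of $\Diff(B^3;\p B^3)$ (another formulation). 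Your approach is shorter once Proposition~\ref{prop:AllWensAreOne} is granted; the paper's buys independence from that black box at the cost of invoking Hatcher's theorem directly.

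There is, however, a gap in your use of Proposition~\ref{prop:AllWensAreOne} that you did not identify. As stated, the proposition only yields an ambient isotopy of $S^4$ fixing the two boundary circles of the \emph{single} wen being replaced. When you apply it to replace $B$ by $W_2$ (fixing $C\times\{1\}$ and $C\times\{2\}$), nothing prevents that isotopy from moving the first wen and in particular its outer boundary circle $C\times\{0\}$; the composite is then not a $\p$-isotopy of the glued annulus. What you actually need is a \emph{relative} version --- an isotopy supported in the $4$-ball $B^3\times[1,2]$ --- which is strictly stronger than what is literally stated. This is plausibly what Kanenobu--Shima's lemma gives, and it matches the paper's informal intent (``modifying a wen without altering the rest of the ribbon torus-knot''), but it needs to be justified rather than assumed. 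Incidentally, your first invocation of Proposition~\ref{prop:AllWensAreOne} is unnecessary: the local parametrisation in the definition of a wen already puts $A$ in the form $W_1$ for its own path $\gamma$, so only the second wen needs to be replaced.
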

Although it is a direct consequence of T. Kanenobu and A. Shima's result, we shall sketch an alternative proof which strongly relies on the Smale conjecture proved by A. Hatcher in \cite{Hatcher}.
\begin{proof}
 We consider two paths $(\gamma^1_t)_{t\in[0,1]}$ and $(\gamma^2_t)_{t\in[0,1]}$ in $\Diff(B^3;\p B^3)$ which parametrizes, respectively,  the first and the second wen.
 By composing the second path with $\gamma^1_1$ and gluing it to the first one, we obtain a parametrization of the two wens as $\big(\gamma^0_t(C),t\big)_{t\in[0,1]}$ where $\gamma^0$ is a path in $\Diff(B^3;\p B^3)$ which starts at the identity and ends at $\gamma^2_1\circ\gamma^1_1$ which is a smooth diffeomorphism of $B^3$ sending $C$ to itself with same orientation.

To define an ambient isotopy of $S^4$, it would be more convenient that $\gamma^0$ also ends at the identity. This can be achieved by slightly packing down the two wens from the top, and reparametrizing the small piece of trivially embedded annulus that it creates.
In this prospect, we define now a path $(\gamma_t)_{t\in[0,1]}$ of elements in $\Diff(B^3;\p B^3)$ which starts at $\gamma^0_1=\gamma^2_1\circ\gamma^1_1$, ends at the identity and preserves globally $C$ for each $t\in[0,1]$.
Since the set $\Diff^+(S^1)$ of orientation preserving smooth diffeomorphisms of $S^1$ is path connected, there is a path between the restriction of $\gamma^0_1$ to $C$ and the identity on $C$. It can be used to deform $\gamma^0_1$ in a neigborhood of $C$ into a diffeomorphism $\gamma_{\frac{1}{3}}$ which fixes both $\p B^3$ and $C$.
Then, we choose an arbitrary path $\lambda$ from a point of $C$ to a point of $\p B^3$. Possibly up to an infinitesimal deformation of $\gamma_{\frac{1}{3}}$, we may assume that $\lambda$ and $\gamma_{\frac{1}{3}}(\lambda)$ meet only at their extremities, so $\lambda\cup\gamma_{\frac{1}{3}}(\lambda)$ is a closed curve embedded in $B^3$. Let $D$ be any disk in $B^3$ bounded by this curve. The diffeomorphism $\gamma_{\frac{1}{3}}$ can now be deformed in a neighborhood of $D$ so the image of
$\lambda$ is pushed along $D$ to $\lambda$. It results a diffeomorphism which fixes $\p B^3$, $C$ and $\lambda$. 
We can deform it furthermore so we obtain a diffeomorphism $\gamma_{\frac{2}{3}}$ which fixes a tubular neighborhood $N$ of $C\cup\lambda\cup\p B^3$.
But $S^3\setminus N$ is a solid torus --- gluing another 3--ball $B$ to $B^3$ gives a 3--sphere, and the union of $B$ with $N$ is an unknotted torus whose complement in the 3--sphere is also a torus --- and since $\Diff(S^1\times D^2;\p S^1\times D^2)$ is contractible, see formulation (9) of the Smale conjecture in \cite[Appendix]{Hatcher}, there is path from $\gamma_{\frac{2}{3}}$ to the identity.

After being packed down, the two wens are now reparametrized as $\big(\gamma^0_{\frac{t}{1-\e}}(C),t\big)_{t\in[0,1-\e]}\cup \big(\gamma_{\frac{t-1+\e}{\e}}(C),t\big)_{t\in[1-\e,1]}$, for some small $\e>0$.
This defines a loop of smooth diffeomorphisms of $B^3$ which fixes $\p B^3$.
But since $\Diff(B^3;\p B^3)$ is contractible, see formulation (1) of the Smale conjecture in \cite[Appendix]{Hatcher}, it is homotopic to the trivial loop.
This provides an homotopy of $B^3\times [0,1]\subset S^4$ which deforms the two glued wens into a trivially embedded annulus.
Since it fixes the boundary of $B^3\times [0,1]$, it can be extended by identity to an homotopy of $S^4$. 
\end{proof}

Now we enumerate three consequences of Corollary \ref{cor:square}.
\begin{description}
\item[Untwisting twisted 3--balls] Wens are no obstruction for a torus in $S^4$ to admit a ribbon filling. Indeed, one can similarly flip disks instead of circles, and all the arguments above apply to wens filled in this way.
This is used in the proof of Proposition \ref{prop:ToriBij}: when rectifying 3--balls so they project properly in $B^3$, some twisting may arise near the fixed ribbon disk ends. This can be untwisted using filled wens, and those cancel pairwise thanks to the filled version of Corollary \ref{cor:square}.
\item[Reidemeister I move for ribbon surfaces] Since it cannot be interpreted in terms of flying rings, it may be worthwhile to pay more attention to move RI on broken torus diagrams. So, besides its realization as a change of filling given in Figure \ref{fig:FillChg_RI} and \ref{fig:FillChg_RIb}, we provide in Figure \ref{fig:RI_isotopy} an isotopy in $S^4$, seen from the (non symmetric) broken torus diagram point of view, which realizes a move RI with the help of Proposition
  \ref{prop:AllWensAreOne}. The left handside of move RI in Figure \ref{fig:BrokenReid} can indeed be seen to be isotopic to the last picture in Figure \ref{fig:RI_isotopy} by pulling one singular circle next to the other. Note that the last five pictures can be replaced by a single use of Corollary \ref{cor:square}.
% Since it may be easier to read for readers with a colored version of this paper, Figure \ref{fig:RI_isotopy1}--\ref{fig:RI_isotopy3} describe the very same isotopy without over/under datas on singular circles but with different colors to represent the different height on the projecting rays. By convention, purple is higher than red, which is higher than black, which is
  % higher than green, which is higher than blue.
\item[Commutation of the $\Tube$ map with the orientation reversals] A consequence of Corollary \ref{cor:square} is that a trivially embedded torus can be inverted in $S^4$. Indeed, one can choose any portion of the torus and create {\it ex nihilo} a pair of wens. Then, one of the wens can travel all along the torus, inverting on its way the orientation. Once done, wens can cancel back one each other.\footnote{An illustration can be found at \url{https://www.youtube.com/watch?v=kQcy5DvpvlM}}

The same isotopy can be performed even if the torus is knotted, but then the passage of the wen
  will modify the neigborhood of each ribbon disk. Indeed, as explained in \cite[Sec. 4.5]{WKO2}, it will reverse the sign associated to each ribbon disk. Starting with $\Tube(K)$, for $K$ any welded knot, the isotopy will hence end at $-\Tube(\overline{K})$.
% Now, let us consider a welded torus $D$, or equivalently a welded Gauss diagram, and its associated ribbon torus $\xT(D)$. If we forget its filling, perform its inversion and fill it back with the initial filling, then we obtain a welded torus whose ribbon disks are in one-to-one correspondance with those of $\xT(D)$ but with opposite
%   signs. This
%   new ribbon torus does not arise directly as the
%   image of a welded Gauss diagram under the map $\xT$ since the orientation is not the one induced by $\R^3$ anymore. However, ribbon tori are given with a bi-orientation, so the orientation can be switch back at the cost of reversing also the co-orientation. As noticed in Remark \ref{rem:CrossChg} and at the end of Section \ref{sec:tube-map}, this means, at the level of welded Gauss diagrams, reversing the cyclic order but also reversing back all signs.
%All put together, the torus inversion in $S^4$ induces a filling change operation which induces the following:
The torus inversion process in $S^4$ induces then the following:
\end{description}
\begin{prop}\label{prop:Inv1}
  For every welded knot $K$, $\Tube(K)=-\Tube(\overline{K})$.
\end{prop}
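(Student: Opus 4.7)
The plan is to realize an explicit ambient isotopy in $S^4$ carrying $\Tube(K)$ to $-\Tube(\overline{K})$, making the equality in $\rK$ manifest. This is the torus-inversion argument sketched in the ``commutation of the $\Tube$ map with the orientation reversals'' bullet immediately preceding the statement, upgraded into a structured proof.

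First, I fix a small arc $\alpha$ on $\Tube(K)$ whose tubular neighborhood is disjoint from every ribbon disk. By Corollary \ref{cor:square}, a trivially embedded cylinder is $\p$-isotopic to the gluing of two wens, so such a pair can be created on $\alpha$ without changing the class of $\Tube(K)$ in $\rK$. Next, I slide one of the two wens around the entire torus until it meets its partner again. Along regular portions of the torus, the transport is an application of Proposition \ref{prop:AllWensAreOne}. When the wen crosses the neighborhood of a ribbon disk $\delta$, I invoke the local analysis of \cite[Sec. 4.5]{WKO2}, already cited above: its passage reverses the sign $\varepsilon(\delta)$ attached by $\xW$ to $\delta$. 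After a full trip the wen has swept out the whole torus, so the surface orientation has been globally reversed, and every $\varepsilon(\delta)$ has been flipped. A final application of Corollary \ref{cor:square} then cancels the reunited pair of wens.

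The resulting oriented ribbon torus-knot is ambient isotopic to $\Tube(K)$ by construction. To identify it as $-\Tube(\overline{K})$, I track the filling throughout the whole process: the underlying 3-manifold is transported by the ambient isotopy, so the cyclic order on ribbon disks and the $h$ function of $\xW$ are preserved, while all the $\varepsilon$-values have been flipped. By Notation \ref{nota:OR}, this is exactly $\xW$ applied to $\overline{K}$, hence by the bijectivity of $\xT$ (Proposition \ref{prop:ToriBij}) the tracked filling equals $\xT(\overline{K})$ in $\rT$; the underlying embedded torus is therefore $\Tube(\overline{K})$, and combining this with the global surface-orientation reversal noted above yields $-\Tube(\overline{K})$ as the endpoint of the isotopy. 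The only delicate step is the local wen/ribbon-disk interaction producing the sign flip on $\varepsilon(\delta)$, which I intend to use as a black box from \cite[Sec. 4.5]{WKO2}; everything else is a straightforward concatenation of the $\p$-isotopies supplied by Corollary \ref{cor:square} and Proposition \ref{prop:AllWensAreOne}.
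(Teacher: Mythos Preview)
Your proposal is correct and follows exactly the paper's own argument: the proof in the paper is precisely the torus-inversion process described in the bullet ``Commutation of the $\Tube$ map with the orientation reversals'' just before the statement, which creates a pair of wens via Corollary \ref{cor:square}, slides one around the torus flipping each $\varepsilon(\delta)$ (via \cite[Sec.~4.5]{WKO2}), and cancels the pair again. You have simply spelled out in more detail the identification of the endpoint with $-\Tube(\overline{K})$ through $\xW$ and Proposition \ref{prop:ToriBij}, which the paper leaves implicit.
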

Together with Proposition \ref{prop:GlRevInv}, it implies: 
\begin{prop}\label{prop:Inv2}
  For every welded knot $K$, $\Tube(-K)=\Tube(\overline{K})=-\Tube(K)$.
\end{prop}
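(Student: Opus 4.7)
The plan is to derive both equalities by straightforward algebraic combination of the two preceding propositions, using only that the operations $-$ and $\overline{\,\cdot\,}$ on $\wK$ are involutions which commute with one another (indeed, reversing orientation and reversing crossing signs are independent operations on a welded diagram, and applying each twice returns to the original).

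First I would establish the second equality, $\Tube(\overline{K})=-\Tube(K)$. This is nothing but a rewriting of Proposition \ref{prop:Inv1}, which asserts $\Tube(K)=-\Tube(\overline{K})$: applying $-$ (orientation reversal of the ambient torus in $S^4$, an involution on $\rK$) to both sides gives $-\Tube(K)=\Tube(\overline{K})$, which is what we want.

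Next I would derive the first equality, $\Tube(-K)=\Tube(\overline{K})$, from Proposition \ref{prop:GlRevInv}. That statement says that for every welded knot $L$, $\Tube(L)=\Tube(-\overline{L})$. Applying it to $L:=\overline{K}$ yields $\Tube(\overline{K})=\Tube(-\overline{\overline{K}})=\Tube(-K)$, since $\overline{\overline{K}}=K$ (the sign reversal of Figure \ref{fig:SignRev} is clearly an involution, as is also directly visible from the welded Gauss diagram description, where it simply reverses all signs twice).

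Chaining the two equalities then gives $\Tube(-K)=\Tube(\overline{K})=-\Tube(K)$, as required. There is no real obstacle here: the only care needed is to check that $-$ and $\overline{\,\cdot\,}$ are well-defined involutions on $\wK$ (which follows from Notation \ref{nota:OR}, as both operations factor through the quotients defining $\wK$) so that Propositions \ref{prop:GlRevInv} and \ref{prop:Inv1} may indeed be applied to $\overline{K}$ in place of $K$.
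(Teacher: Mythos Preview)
Your argument is correct and is exactly the paper's approach: the paper simply remarks that the proposition follows by combining Proposition~\ref{prop:Inv1} with Proposition~\ref{prop:GlRevInv}, which is precisely the two-step derivation you wrote out.
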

% Propositions \ref{prop:Inv1} and \ref{prop:Inv2} were already notified as a consequence of Satoh's and Winter's works in \cite[Prop. 3.3]{Ishi}. There, signs reversals are denoted by a $\dag$ exponent.
\begin{question}
  Is there a one-to-one correspondence between ribbon torus-knots and welded knots up to global reversal? Equivalently, do the classical Reidemeister local moves and the co-orientation reversal generate all possible change of filling? 
\end{question}

%%% Local Variables: 
%%% mode: latex
%%% TeX-master: "OnTubeMap"
%%% End: 

%\clearpage

% Bibliographie
\bibliographystyle{amsalpha}
\bibliography{OnTubeMap}
\addcontentsline{toc}{part}{Bibliography}

% Chapitre 4
\begin{figure}[!h]
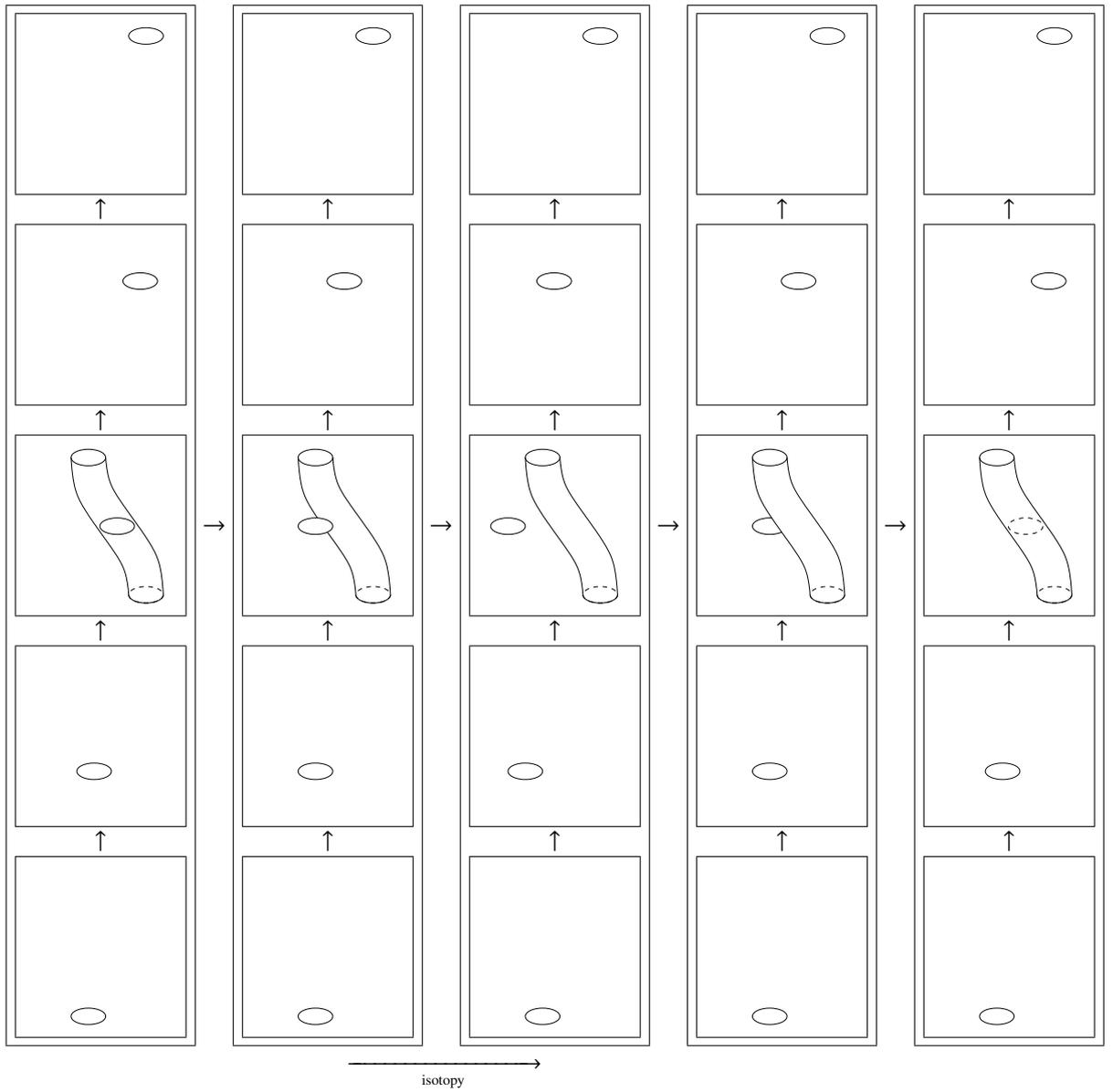

  %\vspace{2.5cm}
  \[
\vcenter{\hbox{\fbox{
  $\hspace{-.25cm}\begin{array}{c}
    \fdessin{2.4cm}{Hom_1_5}\\
    \uparrow\\
    \fdessin{2.4cm}{Hom_1_4}\\
    \uparrow\\
    \fdessin{2.4cm}{Hom_1_3}\\
    \uparrow\\
    \fdessin{2.4cm}{Hom_1_2}\\
    \uparrow\\
    \fdessin{2.4cm}{Hom_1_1}
  \end{array}\hspace{-.25cm}$
}}}
\rightarrow
\vcenter{\hbox{\fbox{
  $\hspace{-.25cm}\begin{array}{c}
    \fdessin{2.4cm}{Hom_2_5}\\
    \uparrow\\
    \fdessin{2.4cm}{Hom_2_4}\\
    \uparrow\\
    \fdessin{2.4cm}{Hom_2_3}\\
    \uparrow\\
    \fdessin{2.4cm}{Hom_2_2}\\
    \uparrow\\
    \fdessin{2.4cm}{Hom_2_1}
  \end{array}\hspace{-.25cm}$
}}}
\rightarrow
\vcenter{\hbox{\fbox{
  $\hspace{-.25cm}\begin{array}{c}
    \fdessin{2.4cm}{Hom_3_5}\\
    \uparrow\\
    \fdessin{2.4cm}{Hom_3_4}\\
    \uparrow\\
    \fdessin{2.4cm}{Hom_3_3}\\
    \uparrow\\
    \fdessin{2.4cm}{Hom_3_2}\\
    \uparrow\\
    \fdessin{2.4cm}{Hom_3_1}
  \end{array}\hspace{-.25cm}$
}}}
\rightarrow
\vcenter{\hbox{\fbox{
  $\hspace{-.25cm}\begin{array}{c}
    \fdessin{2.4cm}{Hom_4_5}\\
    \uparrow\\
    \fdessin{2.4cm}{Hom_4_4}\\
    \uparrow\\
    \fdessin{2.4cm}{Hom_4_3}\\
    \uparrow\\
    \fdessin{2.4cm}{Hom_4_2}\\
    \uparrow\\
    \fdessin{2.4cm}{Hom_4_1}
  \end{array}\hspace{-.25cm}$
}}}
\rightarrow
\vcenter{\hbox{\fbox{
  $\hspace{-.25cm}\begin{array}{c}
    \fdessin{2.4cm}{Hom_5_5}\\
    \uparrow\\
    \fdessin{2.4cm}{Hom_5_4}\\
    \uparrow\\
    \fdessin{2.4cm}{Hom_5_3}\\
    \uparrow\\
    \fdessin{2.4cm}{Hom_5_2}\\
    \uparrow\\
    \fdessin{2.4cm}{Hom_5_1}
  \end{array}\hspace{-.25cm}$
}}}
  \]
\[
\xrightarrow[\hspace{1cm}\textrm{\tiny isotopy}\hspace{1cm}]{}
\]
\caption{Isotopy for the virtual move}\label{fig:V_isotopy}
\end{figure}

\begin{figure}
\[
\begin{array}{ccccccc}
  &\fbox{$\dessin{2.5cm}{4D_R1_1_01}$}
  &\rightarrow&
  \fbox{$\dessin{2.5cm}{4D_R1_1_02}$}
  &\rightarrow&
  \fbox{$\dessin{2.5cm}{4D_R1_1_03}$}
  &\rightarrow\\[2cm]
  &\fbox{$\dessin{2.5cm}{4D_R1_2_01}$}
  &\rightarrow&
  \fbox{$\dessin{2.5cm}{4D_R1_2_02}$}
  &\rightarrow&
  \fbox{$\dessin{2.5cm}{4D_R1_2_03}$}
  &\rightarrow\\
    &&&&&&\\
   \hline
   \hline
   &&&&&&\\
   \rightarrow&
   \fbox{$\dessin{2.5cm}{4D_R1_1_04}$}
  &\rightarrow&
  \fbox{$\dessin{2.5cm}{4D_R1_1_05}$}
  &\rightarrow&
  \fbox{$\dessin{2.5cm}{4D_R1_1_06}$}
  &\rightarrow\\[2cm]
     \rightarrow&
   \fbox{$\dessin{2.5cm}{4D_R1_2_04}$}
  &\rightarrow&
  \fbox{$\dessin{2.5cm}{4D_R1_2_05}$}
  &\rightarrow&
  \fbox{$\dessin{2.5cm}{4D_R1_2_06}$}
  &\rightarrow\\
    &&&&&&\\
   \hline
   \hline
   &&&&&&\\
   \rightarrow&
   \fbox{$\dessin{2.5cm}{4D_R1_1_07}$}
  &\rightarrow&
  \fbox{$\dessin{2.5cm}{4D_R1_1_08}$}
  &\rightarrow&
  \fbox{$\dessin{2.5cm}{4D_R1_1_09}$}
  &\rightarrow\\[2cm]
     \rightarrow&
   \fbox{$\dessin{2.5cm}{4D_R1_2_07}$}
  &\rightarrow&
  \fbox{$\dessin{2.5cm}{4D_R1_2_08}$}
  &\rightarrow&
  \fbox{$\dessin{2.5cm}{4D_R1_2_09}$}
  &\rightarrow
\end{array}
\]  
  \caption{RI seen as a filling change (part 1)}
  \label{fig:FillChg_RI}
\end{figure}
\begin{figure}
\[
\begin{array}{ccccccc}
  \rightarrow&\fbox{$\dessin{2.5cm}{4D_R1_1_10}$}
  &\rightarrow&
  \fbox{$\dessin{2.5cm}{4D_R1_1_11}$}
  &\rightarrow&
  \fbox{$\dessin{2.5cm}{4D_R1_1_11d}$}
  &\rightarrow\\[2cm]
  \rightarrow&\fbox{$\dessin{2.5cm}{4D_R1_2_10}$}
  &\rightarrow&
  \fbox{$\dessin{2.5cm}{4D_R1_2_11}$}
  &\rightarrow&
  \fbox{$\dessin{2.5cm}{4D_R1_2_11d}$}
  &\rightarrow\\
    &&&&&&\\
   \hline
   \hline
   &&&&&&\\
   \rightarrow&
   \fbox{$\dessin{2.5cm}{4D_R1_1_11t}$}
  &\rightarrow&
  \fbox{$\dessin{2.5cm}{4D_R1_1_12}$}
  &\rightarrow&
  \fbox{$\dessin{2.5cm}{4D_R1_1_13}$}
  &\rightarrow\\[2cm]
     \rightarrow&
   \fbox{$\dessin{2.5cm}{4D_R1_2_11t}$}
  &\rightarrow&
  \fbox{$\dessin{2.5cm}{4D_R1_2_12}$}
  &\rightarrow&
  \fbox{$\dessin{2.5cm}{4D_R1_2_13}$}
  &\rightarrow\\
    &&&&&&\\
   \hline
   \hline
   &&&&&&\\
   \rightarrow&
   \fbox{$\dessin{2.5cm}{4D_R1_1_14}$}
  &\rightarrow&
  \fbox{$\dessin{2.5cm}{4D_R1_1_15}$}
  &&&\\[2cm]
     \rightarrow&
   \fbox{$\dessin{2.5cm}{4D_R1_2_14}$}
  &\rightarrow&
  \fbox{$\dessin{2.5cm}{4D_R1_2_15}$}
  &&&
\end{array}
\]  
  \caption{RI seen as a filling change (part 2)}
  \label{fig:FillChg_RIb}
\end{figure}

\begin{figure}
  \[
  \begin{array}{ccccc}
    \phantom{\rightarrow}
    &\fbox{$\dessin{3cm}{4D_R2_1_01}$}
    &\rightarrow&
    \fbox{$\dessin{3cm}{4D_R2_1_02}$}
    &\rightarrow\\[2cm]
    \phantom{\rightarrow}
    &\fbox{$\dessin{3cm}{4D_R2_2_01}$}
    &\rightarrow&
    \fbox{$\dessin{3cm}{4D_R2_2_02}$}
    &\rightarrow
\end{array}
\]
\[
\begin{array}{ccccccc}
    &&&&&&\\[-.1cm]
   \hline
   \hline
   &&&&&&\\
   \rightarrow&
   \fbox{$\dessin{3cm}{4D_R2_1_03}$}
    &\rightarrow&
    \fbox{$\dessin{3cm}{4D_R2_1_04}$}
    &\rightarrow&
    \fbox{$\dessin{3cm}{4D_R2_1_05}$}
    &\rightarrow\\[2cm]
    \rightarrow&\fbox{$\dessin{3cm}{4D_R2_2_03}$}
    &\rightarrow&
    \fbox{$\dessin{3cm}{4D_R2_2_04}$}
    &\rightarrow&
    \fbox{$\dessin{3cm}{4D_R2_2_05}$}
    &\rightarrow\\
    &&&&&&\\
   \hline
   \hline
    &&&&&&\\[-.1cm]
\end{array}
\]
\[
\begin{array}{ccccc}
   \rightarrow&
   \fbox{$\dessin{3cm}{4D_R2_1_06}$}
   &\rightarrow&\fbox{$\dessin{3cm}{4D_R2_1_07}$}&\phantom{\rightarrow}\\[2cm]
   \rightarrow&
   \fbox{$\dessin{3cm}{4D_R2_2_06}$}
   &\rightarrow&\fbox{$\dessin{3cm}{4D_R2_2_07}$}&\phantom{\rightarrow}
\end{array}
\]
  \caption{RII seen as a filling change}
  \label{fig:FillChg_RII}
\end{figure}

% \begin{figure}
%   \[
%   \begin{array}{ccccccc}
%     &\fbox{$\dessin{3cm}{4D_R2_1_01}$}
%     &\rightarrow&
%     \fbox{$\dessin{3cm}{4D_R2_1_02}$}
%     &\rightarrow&
%     \fbox{$\dessin{3cm}{4D_R2_1_03}$}
%     &\rightarrow\\[2cm]
%     &\fbox{$\dessin{3cm}{4D_R2_2_01}$}
%     &\rightarrow&
%     \fbox{$\dessin{3cm}{4D_R2_2_02}$}
%     &\rightarrow&
%     \fbox{$\dessin{3cm}{4D_R2_2_03}$}
%     &\rightarrow\\
%     &&&&&&\\
%    \hline
%    \hline
%    &&&&&&\\
%    \rightarrow&
%    \fbox{$\dessin{3cm}{4D_R2_1_04}$}
%     &\rightarrow&
%     \fbox{$\dessin{3cm}{4D_R2_1_05}$}
%     &\rightarrow&
%     \fbox{$\dessin{3cm}{4D_R2_1_06}$}
%     &\rightarrow\\[2cm]
%     \rightarrow&\fbox{$\dessin{3cm}{4D_R2_2_04}$}
%     &\rightarrow&
%     \fbox{$\dessin{3cm}{4D_R2_2_05}$}
%     &\rightarrow&
%     \fbox{$\dessin{3cm}{4D_R2_2_06}$}
%     &\rightarrow\\
%     &&&&&&\\
%    \hline
%    \hline
%    &&&&&&\\
%    \rightarrow&
%    \fbox{$\dessin{3cm}{4D_R2_1_07}$}
%    &&&&&\\[2cm]
%    \rightarrow&
%    \fbox{$\dessin{3cm}{4D_R2_2_07}$}
%    &&&&&
% \end{array}
% \]
%   \caption{RII seen as a filling change}
%   \label{fig:FillChg_RII}
% \end{figure}

\begin{figure}
    \[
  \begin{array}{ccccccccc}
    &\fbox{$\dessin{3cm}{4D_R3_1_01}$}
    &\rightarrow&
    \fbox{$\dessin{3cm}{4D_R3_1_02}$}
    &\rightarrow&
    \fbox{$\dessin{3cm}{4D_R3_1_03}$}
    &\rightarrow&
    \fbox{$\dessin{3cm}{4D_R3_1_04}$}
    &\rightarrow\\[2cm]
    &\fbox{$\dessin{3cm}{4D_R3_2_01}$}
    &\rightarrow&
    \fbox{$\dessin{3cm}{4D_R3_2_02}$}
    &\rightarrow&
    \fbox{$\dessin{3cm}{4D_R3_2_03}$}
    &\rightarrow&
    \fbox{$\dessin{3cm}{4D_R3_2_04}$}
    &\rightarrow\\
    &&&&&&&&\\
   \hline
   \hline
   &&&&&&&&\\
   \rightarrow&
   \fbox{$\dessin{3cm}{4D_R3_1_05}$}
    &\rightarrow&
    \fbox{$\dessin{3cm}{4D_R3_1_06}$}
    &\rightarrow&
    \fbox{$\dessin{3cm}{4D_R3_1_07}$}
    &\rightarrow&
    \fbox{$\dessin{3cm}{4D_R3_1_08}$}
    &\rightarrow\\[2cm]
    \rightarrow&\fbox{$\dessin{3cm}{4D_R3_2_05}$}
    &\rightarrow&
    \fbox{$\dessin{3cm}{4D_R3_2_06}$}
    &\rightarrow&
    \fbox{$\dessin{3cm}{4D_R3_2_07}$}
    &\rightarrow&
    \fbox{$\dessin{3cm}{4D_R3_2_08}$}
    &\rightarrow\\
    &&&&&&&&\\
   \hline
   \hline
   &&&&&&&&\\
   \rightarrow&
   \fbox{$\dessin{3cm}{4D_R3_1_09}$}
    &\rightarrow&
    \fbox{$\dessin{3cm}{4D_R3_1_09d}$}
    &\rightarrow&
    \fbox{$\dessin{3cm}{4D_R3_1_10}$}
    &\rightarrow&
    \fbox{$\dessin{3cm}{4D_R3_1_11}$}
    &\\[2cm]
    \rightarrow&\fbox{$\dessin{3cm}{4D_R3_2_09}$}
    &\rightarrow&
    \fbox{$\dessin{3cm}{4D_R3_2_09d}$}
    &\rightarrow&
    \fbox{$\dessin{3cm}{4D_R3_2_10}$}
    &\rightarrow&
    \fbox{$\dessin{3cm}{4D_R3_2_11}$}
    &
\end{array}
\]
  \caption{RIII seen as a filling change}
  \label{fig:FillChg_RIII}
\end{figure}

% \begin{figure}
%   \[
%   \begin{array}{ccc}
%     \fdessinH{5cm}{R1_NB_01}
%     &\hspace{-.1cm}\rightarrow\hspace{-.1cm}&
%     \fdessinH{5cm}{R1_NB_02}\\
%     &&\downarrow{\Huge \strut}\\
%     \fdessinH{5cm}{R1_NB_04}
%     &\hspace{-.1cm}\leftarrow\hspace{-.1cm}&
%     \fdessinH{5cm}{R1_NB_03}\\
%     \downarrow{\Huge \strut}&&\\
% \fdessinH{5cm}{R1_NB_05}
% &\hspace{-.1cm}\rightarrow\hspace{-.1cm}&
% \fdessinH{5cm}{R1_NB_06}\\
% &&\downarrow{\Huge \strut}\makebox[0cm]{\hspace{1.2cm}\textrm{Prop. \ref{prop:AllWensAreOne}}}
%   \end{array}
%   \]
%   \caption{RI seen as a wen cancelation (part 1)}
% \label{fig:RI_isotopy1_NB}
% \end{figure}
% \begin{figure}
% \vspace{3cm}
% \[
% \begin{array}{ccc}
% &&\downarrow{\Huge \strut}\makebox[0cm]{\hspace{1.2cm}\textrm{Prop. \ref{prop:AllWensAreOne}}}\\
% \fdessinH{5cm}{R1_NB_08}
% &\hspace{-.1cm}\leftarrow\hspace{-.1cm}&
% \fdessinH{5cm}{R1_NB_07}\\
% \downarrow{\Huge \strut}&&\\
% \fdessinH{5cm}{R1_NB_09}
% &\hspace{-.1cm}\rightarrow\hspace{-.1cm}&
% \fdessinH{5cm}{R1_NB_10}
% \end{array}
% \]
%   \caption{RI seen as a wen cancelation (part 2)}
% \label{fig:RI_isotopy2_NB}
% \end{figure}

\begin{figure}
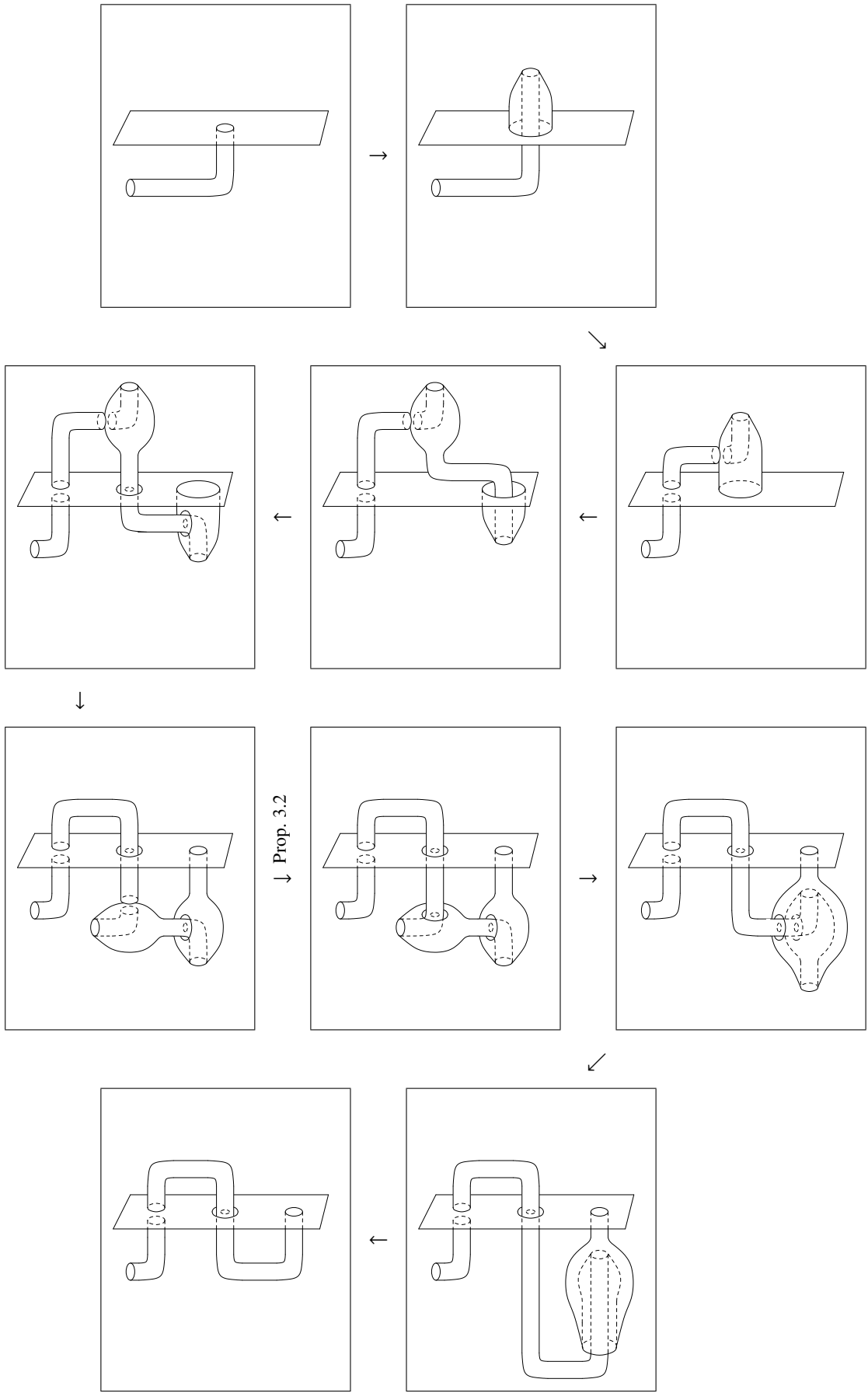

  \begin{gather*}
    \fdessinH{4cm}{R1_NB_01}\Rar\fdessinH{4cm}{R1_NB_02}\\
    \hspace{7.5cm}\searrow{\huge \strut}\\
    \fdessinH{4cm}{R1_NB_05}\Lar\fdessinH{4cm}{R1_NB_04}\Lar\fdessinH{4cm}{R1_NB_03}\\
    \downarrow{\huge \strut}\hspace{10cm}\\
    \fdessinH{4cm}{R1_NB_06}
\makebox[0cm]{\hspace{.9cm}$\rotatebox{90}{\small \hspace{.35cm}Prop. \ref{prop:AllWensAreOne}}$}\Rar
\fdessinH{4cm}{R1_NB_07}\Rar\fdessinH{4cm}{R1_NB_08}\\
    \hspace{7.5cm}\swarrow{\huge \strut}\\
   \fdessinH{4cm}{R1_NB_10}\Lar\fdessinH{4cm}{R1_NB_09}
  \end{gather*}
  \caption{{RI seen as a wen cancellation}}
  \label{fig:RI_isotopy}
\end{figure}

\end{document}